\setlist{listparindent=\parindent, parsep=0pt, topsep=0pt, itemsep=0pt}
\newtheorem{thm}{Theorem}[section]
\newtheorem{cor}[thm]{Corollary}
\newtheorem{lem}[thm]{Lemma}
\newtheorem{prop}[thm]{Proposition}
\newtheorem{conj}[thm]{Conjecture}
\newtheorem{defn}[thm]{Definition}
\newtheorem{ex}[thm]{Example}
\providecommand*{\toclevel@thm}{0}
\providecommand*{\toclevel@lem}{0}
\providecommand*{\toclevel@cor}{0}
\providecommand*{\toclevel@prop}{0}
\providecommand*{\toclevel@defn}{0}
\providecommand*{\toclevel@conj}{0}
\providecommand*{\toclevel@ex}{0}
\DeclareMathOperator\ad{ad}
\DeclareMathOperator\Ann{Ann}
\DeclareMathOperator\Aut{Aut}
\DeclareMathOperator\Ext{Ext}
\DeclareMathOperator\gld{gld}
\DeclareMathOperator\gr{gr}
\DeclareMathOperator\pd{pd}
\DeclareMathOperator\pid{PI-deg}
\DeclareMathOperator\tr{trace}
\newcommand{\B}{\mathcal{B}}
\newcommand{\cnt}{\mathcal{Z}}
\newcommand{\F}{\mathcal{F}}
\newcommand{\gk}{\text{GK.dim}}
\newcommand{\id}{\mathrm{id}}
\newcommand{\inv}{^{-1}}
\newcommand{\iso}{\cong}
\newcommand{\niso}{\ncong}
\newcommand{\K}{K}
\renewcommand{\L}{\mathcal{L}}
\newcommand{\tensor}{\otimes}
\newcommand{\van}{\mathcal{V}}
\newcommand{\xx}{R_{x^2}}
\newcommand{\yx}{R_{yx}}
\newcommand{\qp}{\mathcal{O}_q(\K^2)}
\newcommand{\fwa}{A_1(\K)}
\newcommand{\wa}{A_1^q(\K)}
\newcommand{\jp}{\mathcal{J}}
\newcommand{\sjp}{\mathcal{J}_1}
\newcommand{\os}{\mathcal{S}}
\newcommand{\env}{\mathfrak{U}}
\newcommand{\kx}{\K[x]}
\newcommand{\sxx}{R_{x^2-1}}
\newcommand{\hsjp}{H(\sjp)}
\renewcommand{\O}{\mathcal{O}}
\title{PBW deformations of Artin-Schelter regular algebras}
\author{Jason Gaddis}
\date{}
\begin{document}
\maketitle

\begin{abstract}We consider algebras that can be realized as PBW deformations of (Artin-Schelter) regular algebras. This is equivalent to the homogenization of the algebra being regular. It is shown that the homogenization, when it is a geometric algebra, contains a component whose points are in 1-1 correspondence with the simple modules of the deformation. We classify all PBW deformations of 2-dimensional regular algebras and give examples of 3-dimensional deformations. Other properties, such as the skew Calabi-Yau property and closure under tensor products, are considered.\end{abstract}

\section{Introduction}

In \cite{artsch}, Artin and Schelter introduced a class of 3-dimensional graded algebras which may be regarded as noncommutative versions of the polynomial ring in three variables. These algebras, which posses suitably nice growth and homological properties, are now called (Artin-Schelter) regular algebras. In \cite{atv}, Artin, Tate, and Van den Bergh showed that these algebras have geometric interpretations related to point schemes in projective space. The classification of 3-dimensional regular algebras was completed in the two aforementioned articles and the classification of higher dimensional regular algebras is an active open problem.

We consider algebras on the periphery of this classification. Indeed, there are many non-graded algebras with similar properties to regular algebras. Examples include the quantum Weyl algebras and the enveloping algebra of the two-dimensional solvable Lie algebra. If $A$ is an algebra such that its homogenization, $H(A)$, is regular, we say $A$ is \textit{essentially regular}. This is equivalent to the associated graded ring, $\gr(A)$, being regular and equivalent to $A$ being a \textit{PBW deformation} of $\gr(A)$ (Proposition \ref{grprop}). If $x_0$ is the homogenizing element, then one can pass certain properties between $A$ and $H(A)$ (Propositions \ref{cntr} and \ref{hprops}) via the isomorphism $H(A)[x_0\inv] \iso A[x_0^{\pm 1}]$. In Section \ref{cy}, we review the definition of a skew Calabi-Yau algebra and show that all essentially regular algebras are skew Calabi-Yau.

All 3-dimensional regular algebras are geometric algebras, that is, they are canonically identified with a pair $(E,\sigma)$ where $E \subset \mathbb{P}(V^*)$ is a scheme and $\sigma \in \Aut(E)$. If $A$ is essentially regular and $H(A)$ is geometric, then there exists a subscheme $E_1 \subset E$ whose points are in 1-1 correspondence with the 1-dimensional simple modules of $H(A)$ (Proposition \ref{dim1}). In case $A$ is 2-dimensional and not PI, these are all of the finite-dimensional simple modules of $H(A)$ (Proposition \ref{fdim}). We generalize this result in Proposition \ref{Itors} and make progress towards a conjecture by Walton that all finite-dimensional simple modules of a non-PI deformed Sklyanin algebra are 1-dimensional \cite{walton}. 

Relying on work done in \cite{twogen}, we classify all 2-dimensional essentially regular algebras (Corollary \ref{eclass}) and compile some known examples of 3-dimensional essentially regular algebras (Examples \ref{ex1}-\ref{ex3}). Finally, in Section \ref{fivedim}, we show that the property of being essentially regular is closed under tensor products and use this to give examples of 5-dimensional regular algebras.

\section{Definitions and initial properties}\label{props}

Let $\K$ be an algebraically closed, characteristic zero field. All algebras are $\K$-algebras and all isomorphisms should be read as `isomorphisms of $\K$-algebras'. Suppose $A$ is defined as a factor algebra of the free algebra on $n$ generators by $m$ polynomial relations, i.e., 
\begin{align}\label{form}
	A = \K\langle x_1,\hdots,x_n \mid f_1,\hdots,f_m \rangle.
\end{align}
Throughout, we assume the generators $x_i$ all have degree 1.

Given a (noncommutative) polynomial $f \in \K\langle x_1,\hdots,x_n\rangle$ with $\alpha=\deg(f)$, write 
\begin{align}\label{fexpl}
	f = \sum_{\gamma \in \Gamma} c_\gamma x_{i_1}^{\alpha_{\gamma_1}} \cdots x_{i_\ell}^{\alpha_{\gamma_\ell}}, c_\gamma \in \K, \alpha_{\gamma_i} \in \K, \sum_{i=1}^\ell \alpha_{\gamma_i} \leq \alpha,
\end{align} 
where $\Gamma$ ranges over distinct monomials in the free algebra and all but finitely many of the $c_\gamma$ are zero. The homogenization of $f$ is then
\begin{align*}
	\hat{f} = \sum_{\gamma \in \Gamma} c_\gamma x_{i_1}^{\alpha_{\gamma_1}} \cdots x_{i_\ell}^{\alpha_{\gamma_\ell}} x_0^{\alpha_{\gamma_0}},
\end{align*}
where $x_0$ is a new, central indeterminate and $\alpha_{\gamma_0}$ is chosen such that $\sum_{i=0}^\ell \alpha_{\gamma_i} = \alpha$. Then $\hat{f}$ is homogeneous.

\begin{defn}\label{hom-def}Let $A$ be of form (\ref{form}). The \textbf{homogenization} $H(A)$ of $A$ is the $\K$-algebra on the $n+1$ generators $x_0,x_1,\hdots,x_n$ subject to the homogenized relations $\hat{f}_i$ as well as the additional relations $x_0x_i-x_ix_0$ for all $i \in \{1,\hdots,n\}$.\end{defn}

A \textit{filtration} $\F$ on an algebra $A$ is a collection of vector spaces $\{\F_n(A)\}$ such that \[\F_n(A) \subset \F_{n+1}(A), \F_n(A) \cdot \F_m(A) \subset \F_{n+m}(A) \text{ and } \bigcup \F_n(A) = A.\] The filtration $\F$ is said to be \textit{connected} if $\F_0(A)=\K$ and $\F_\ell(A) = 0$ for all $\ell < 0$. The \textit{associated graded algebra} of $A$ is $\gr_\F(A) := \bigoplus_{i \geq 0} \F_i(A)/\F_{i-1}(A)$. The algebra $\gr_\F(A)$ is said to be connected if the filtration $\F$ is connected. 

Associated to the pair $(A,\F)$ is also the \textit{Rees ring} of $A$,
	\[ R_\F(A) := \bigoplus_{n \geq 0} \F_n(A) x_0^n.\] 
For an algebra defined by generators and relations, as above, there is a standard connected filtration wherein $\F_\ell(A)$ is the span of all monomials of degree at most $\ell$. Since this filtration is the only one we consider, we drop the subscript on $\gr(A)$ and $R(A)$. One can always recover $A$ and $\gr(A)$ from $H(A)$ via $A \iso H(A)/(x_0-1)H$ and $\gr(A) \iso H(A)/x_0H$, respectively.

An algebra is said to be \textit{graded} if $\gr(A) = A$. In this case, we write $A_m$ for the vector space spanned by homogeneous elements of degree $m$. If $A$ is graded, then the \textit{global dimension} of $A$, $\gld(A)$, is the projective dimension of the trivial module $\K_A = A/A_+$, where $A_+$ is the augmentation ideal generated by all degree one elements. Let $V$ be a $\K$-algebra generating set for $A$ and $V^n$ the set of degree $n$ monomials in $A$. The Gelfand-Kirilov (GK) dimension of $A$ is defined as
	 \[ \gk(A) := \limsup_{n \rightarrow \infty} \log_n (\dim V^n). \]
The algebra $A$ is said to be \textit{AS-Gorenstein} if $\Ext_A^i(\K_A,A)\iso \delta_{i,d} {_A}\K$ where $\delta_{i,d}$ is the Kronecker delta and $d=\gld(A)$.

\begin{defn}A connected graded algebra is said to be (Artin-Schelter) \textbf{regular} of dimension $d$ if $H$ has finite global dimension $d$, finite GK dimension, and is AS-Gorenstein. We say an algebra $A$ is \textbf{essentially regular} of dimension $d$ if $H(A)$ is regular of dimension $d+1$.\end{defn}

In the case that $x_0$ is not a zero divisor, we have $H(A) \iso R(A)$ (\cite{wu}, Proposition 2.6) and $H(A)$ becomes a regular central extension of $\gr(A)$ (see \cite{casshel}, \cite{bruyn}). However, this need not always be the case, which the next example illustrates, and so we choose to use $H(A)$ instead of $R(A)$ in the above definition.

\begin{ex}Let $A=\K\langle x_1,x_2 \mid x_1^2-x_2\rangle$. Then $A \iso \K[x]$. However, the algebra $H(A)$ is not regular. Indeed,
	\[ x_1x_2x_0 = x_1x_1^2 = x_1^2x_1 = x_2x_0x_1 = x_2x_1x_0 \Rightarrow (x_1x_2-x_2x_1)x_0=0.\]
Thus, either $H(A)$ is not a domain or else $H(A)$ is commutative. The latter cannot hold because $H(A)/x_0H(A) \iso \K\langle x_1,x_2 \mid x_1^2\rangle$ is not commutative. By \cite{atv2}, Theorem 3.9, all regular algebras of dimension at most four are domains. Hence, in this case, $H(A) \niso R(A)$.\end{ex}

If $A$ is essentially regular of dimension at most three, then $H(A)$ is domain. We will assume, hereafter, that $x_0$ is not a zero divisor in $H(A)$ when $A$ is essentially regular.

The dimension of an essentially regular algebra is not the same as its global dimension in all cases. The first Weyl algebra, $\fwa = \K\langle x,y \mid yx-xy+1 \rangle$, is dimension two essentially regular but has global dimension one. It would be interesting to know whether there is a lower bound on the global dimension of an essentially regular algebra based on that of its homogenization.

The following lemma is useful in passing properties between $A$ and $H(A)$.

\begin{lem}\label{loclem}Suppose $x_0$ is not a zero divisor. Then $H(A)[x_0\inv] \iso A[x_0^{\pm 1}]$.\end{lem}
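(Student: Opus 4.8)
The claim is that localizing $H(A)$ at the powers of the central element $x_0$ gives an algebra isomorphic to $A[x_0^{\pm 1}]$, the Laurent polynomial ring over $A$ in a central variable $x_0$. The natural approach is to build a map in one direction and check it is an isomorphism. First I would observe that $x_0$ is central in $H(A)$ (it commutes with every $x_i$ by the defining relations of Definition~\ref{hom-def}), so the Ore localization $H(A)[x_0^{-1}]$ makes sense, and $x_0$ remains central and invertible there. Then I would define a map $\varphi\colon \K\langle x_0,x_1,\dots,x_n\rangle \to A[x_0^{\pm 1}]$ on the free algebra by $x_0 \mapsto x_0$ and $x_i \mapsto x_i x_0$ for $i \geq 1$, and verify that it kills the defining relations of $H(A)$.

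**Key steps.** The heart of the verification is the following: for a polynomial $f$ of degree $\alpha$ written as in~(\ref{fexpl}), the homogenized relation $\hat f$ is obtained from $f$ by inserting powers of $x_0$ so that every monomial has total degree $\alpha$. Applying $\varphi$ to a monomial $x_{i_1}^{\alpha_{\gamma_1}}\cdots x_{i_\ell}^{\alpha_{\gamma_\ell}} x_0^{\alpha_{\gamma_0}}$ of $\hat f$ sends it to $(x_{i_1}x_0)^{\alpha_{\gamma_1}}\cdots (x_{i_\ell}x_0)^{\alpha_{\gamma_\ell}} x_0^{\alpha_{\gamma_0}}$, and since $x_0$ is central this equals $x_{i_1}^{\alpha_{\gamma_1}}\cdots x_{i_\ell}^{\alpha_{\gamma_\ell}}\, x_0^{\alpha}$. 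Summing over $\gamma$, the common factor $x_0^{\alpha}$ pulls out and we get $\varphi(\hat f_j) = f_j\, x_0^{\alpha_j}$, which is zero in $A[x_0^{\pm 1}]$ because $f_j = 0$ in $A$. The extra relations $x_0 x_i - x_i x_0$ map to $x_0 x_i x_0 - x_i x_0 x_0 = 0$. Hence $\varphi$ factors through $H(A)$, and since $\varphi(x_0) = x_0$ is invertible in the target, it extends to a homomorphism $\bar\varphi\colon H(A)[x_0^{-1}] \to A[x_0^{\pm 1}]$. To produce an inverse, define $\psi\colon A[x_0^{\pm 1}] \to H(A)[x_0^{-1}]$ by $x_0 \mapsto x_0$, $x_0^{-1}\mapsto x_0^{-1}$, and $x_i \mapsto x_i x_0^{-1}$; one checks analogously that $\psi$ respects the relations $f_j = 0$ of $A$ (the dehomogenized relations), using that in $H(A)[x_0^{-1}]$ one has $\hat f_j = 0$ and dividing by the appropriate power of $x_0$ recovers $f_j$ evaluated at $x_i x_0^{-1}$. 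Finally, $\bar\varphi$ and $\psi$ are mutually inverse on generators, hence inverse isomorphisms.

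**Main obstacle.** The routine part is checking the relations, but the one place to be careful is that the substitution $x_i \mapsto x_i x_0$ interacts correctly with the $x_0$-exponents in $\hat f$: one must confirm that the exponent bookkeeping in the definition of homogenization — the choice of $\alpha_{\gamma_0}$ making each monomial have total degree exactly $\alpha$ — is exactly what is needed for the $x_0$'s to combine into a single central factor $x_0^{\alpha}$ independent of $\gamma$. This is really the content of the lemma, and it works precisely because $\deg(f)$ is used uniformly across all monomials of $\hat f$. A secondary point is that one should note $x_0$ is a non-zero-divisor in $H(A)$ (the standing hypothesis) so that the localization $H(A)[x_0^{-1}]$ embeds $H(A)$ and is well-behaved; without this the statement would need to be interpreted more carefully, but under the hypothesis everything is clean. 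I expect the whole argument to be short once the substitution $x_i \leftrightarrow x_i x_0^{\pm 1}$ is written down.
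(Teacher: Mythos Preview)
Your proposal is correct and follows essentially the same route as the paper: both arguments rest on the substitution $x_i \leftrightarrow x_0^{\pm 1}x_i$ (you put $x_0$ on the right, the paper on the left, which is immaterial since $x_0$ is central) and the observation that under this substitution $\hat f$ and $x_0^{\alpha}f$ correspond. The paper is terser---it simply exhibits the new generators $X_i = x_0^{-1}x_i$ inside $H(A)[x_0^{-1}]$ and $X_i = x_0 x_i$ inside $A[x_0^{\pm 1}]$ and asserts they satisfy the relations of the other algebra---whereas you spell out the homomorphisms and their mutual inversion more explicitly, but the content is identical.
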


\begin{proof}Let $f$ be a defining relation for $A$ and $\hat{f}$ the homogenized relation in $H$. Let $\alpha=\deg(f)$ and write $f$ as in (\ref{fexpl}). Then \[0 = x_0^{-\alpha}\hat{f} = \sum_{\gamma \in \Gamma} c_\gamma (x_0\inv x_{i_1})^{\alpha_{\gamma_1}} \cdots (x_0\inv x_{i_\ell})^{\alpha_{\gamma_\ell}}.\] If we let $X_0=x_0$ and $X_i=x_0\inv x_i$ for $i > 0$, then the $\{X_i\}_{i \geq 0}$ generate $A[x_0^{\pm 1}]$ in $H[x_0\inv]$. Conversely, in $A[x_0^{\pm 1}]$ we have \[0 = x_0^\alpha f = \sum_{\gamma \in \Gamma} c_\gamma (x_0x_{i_1})^{\alpha_{\gamma_1}} \cdots (x_0x_{i_\ell})^{\alpha_{\gamma_\ell}}.\] If we let $X_0=x_0$ and $X_i=x_0x_i$ for $i > 0$, then the $\{X_i\}_{i \geq 0}$ generate $H[x_0\inv]$ in $A[x_0^{\pm 1}]$.\end{proof}

Let $\cnt(A)$ denote the center of $A$. One would expect a natural equivalence between the center of a homogenization and the homogenization of a center. The next proposition formalizes that idea. 

\begin{prop}\label{cntr}Suppose $x_0$ is not a zero divisor. By identify generators, we have \[\cnt(H(A)) = H(\cnt(A)).\]\end{prop}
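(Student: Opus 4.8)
The plan is to compute both sides inside the localization $H[x_0\inv]$, where $H := H(A)$, exploiting that $x_0$ is a central non-zero-divisor. Two standing facts drive everything. First, $\cnt(H)$ is a \emph{graded} subalgebra of $H$: if $z = \sum_d z_d$ is central with $z_d \in H_d$, then comparing degrees in $az = za$ for homogeneous $a$ shows each $z_d$ is central. Second, by Lemma \ref{loclem} we may identify $H[x_0\inv]$ with $A[x_0^{\pm 1}]$, and since $x_0$ is central this is a Laurent extension of $A$ by a central variable; hence $\cnt(H[x_0\inv]) = \cnt(A)[x_0^{\pm 1}]$, computed coefficientwise in $x_0$.

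I would also need to know how $H$ itself sits inside $H[x_0\inv]$ under this identification. Tracking degrees in the proof of Lemma \ref{loclem} --- where the copy of $A$ is generated by the degree-zero elements $x_0\inv x_i$ while $x_0$ has degree one --- the isomorphism is graded for the $x_0$-grading on $A[x_0^{\pm 1}]$ (with $A$ in degree $0$), and one checks $H_d x_0^{-d} = \F_d(A)$, so that $H$ goes over to the Rees ring $R(A) = \bigoplus_{n \geq 0} \F_n(A)x_0^n \subseteq A[x_0^{\pm1}]$.

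With these in hand the argument is short. A homogeneous central element of $H$ of degree $d$ is also central in $H[x_0\inv]$, hence lies in $\cnt(A) x_0^d$, the degree-$d$ part of $\cnt(A)[x_0^{\pm1}]$; being in $H = R(A)$ it has the form $z x_0^d$ with $z \in \cnt(A) \cap \F_d(A)$. Conversely, for any $z \in \cnt(A) \cap \F_d(A)$ the element $z x_0^d$ lies in $R(A) = H$ and is central in $H[x_0\inv]$, hence in particular central in $H$. Therefore
\[
\cnt(H) \;=\; \bigoplus_{d \geq 0}\bigl(\cnt(A)\cap \F_d(A)\bigr)x_0^d \;=\; \cnt(A)[x_0^{\pm1}]\cap R(A),
\]
which is precisely the Rees ring of $\cnt(A)$ for the filtration induced from $A$; as $x_0$ remains a non-zero-divisor here, this Rees ring is $H(\cnt(A))$, the matching of generators being exactly $z x_0^d \leftrightarrow \hat z$ in the sense of Definition \ref{hom-def}.

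The step I expect to be fussiest is not the localization argument but the meaning of $H(\cnt(A))$ itself: homogenization is not functorial, $\cnt(A)$ is not a priori presented in the form (\ref{form}), and one must check that the subalgebra of $H(A)$ generated by homogenizations of generators of $\cnt(A)$ really is the Rees ring above rather than something smaller. Reading $H(\cnt(A))$ as that Rees ring $R(\cnt(A))$ with the induced filtration makes the claim precise and is what "identifying generators'' should be taken to mean; with that convention the displayed equality is the statement.
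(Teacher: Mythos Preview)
Your argument is correct and follows essentially the same route as the paper: both proofs localize at $x_0$, use Lemma~\ref{loclem} together with the fact that the center of a Laurent extension in a central variable is the Laurent extension of the center, and then descend to the subrings by tracking elements involving only nonnegative powers of $x_0$. The paper runs the chain
\[
H(\cnt(A))[x_0\inv] \iso \cnt(A)[x_0^{\pm 1}] = \cnt(A[x_0^{\pm 1}]) \iso \cnt(H(A)[x_0\inv]) = \cnt(H(A))[x_0\inv]
\]
and then chases an element $\hat f$ through it, arguing in each direction that only nonnegative powers of $x_0$ occur. Your version is organized a bit more structurally: you first observe that $\cnt(H)$ is graded, then identify $H$ with the Rees ring $R(A)$ inside $A[x_0^{\pm1}]$, and compute $\cnt(H)$ degree by degree as $\bigoplus_d(\cnt(A)\cap\F_d(A))x_0^d = R(\cnt(A))$. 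This is the same content, just made more explicit; in particular your gradedness remark is what underlies the paper's ``only positive powers of $x_0$ appear'' step. You also correctly flag a point the paper leaves implicit, namely that $H(\cnt(A))$ is not literally covered by Definition~\ref{hom-def} and must be read as the Rees ring of $\cnt(A)$ with the induced filtration; that is indeed how the statement should be interpreted.
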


\begin{proof}By \cite{rowen}, Propositions 1.2.20 (ii) and 1.10.13, along with Lemma \ref{loclem},
	\[ H(\cnt(A))[x_0\inv] \iso \cnt(A)[x_0^{\pm 1}] = \cnt(A[x_0^{\pm 1}]) \iso \cnt(H(A)[x_0\inv]) = \cnt(H(A))[x_0\inv].\]
Thus, $H(\cnt(A))[x_0\inv] \iso \cnt(H(A))[x_0\inv]$. It remains to be shown that the subalgebras $H(\cnt(A))$ and $\cnt(H(A))$ are isomorphic and, moreover, the elements can be identified by generators.

Let $\hat{f} \in H(\cnt(A))$, then $\hat{f} \in H(\cnt(A))[x_0\inv]$ and, by Lemma \ref{loclem}, $f \in \cnt(A)[x_0^{\pm 1}]$. Since only positive powers of $x_0$ appear in $f$, then $f \in \cnt(A)[x_0]$. Therefore, \[f \in  \cnt(A)[x_0^{\pm 1}] = \cnt(A[x_0^{\pm 1}]),\] and so $\hat{f} \in \cnt(H(A)[x_0\inv])=\cnt(H(A))[x_0\inv]$. Again, since only positive powers of $x_0$ appear in $\hat{f}$, then $\hat{f} \in \cnt(H(A))$. The converse is similar.
\end{proof}

We now consider properties that pass between an essentially regular algebra and its homogenization.

\begin{prop}\label{hprops}Let $A$ be essentially regular and $H=H(A)$.
\begin{enumerate}
  \item $A$ is prime if and only if $H$ is prime;
  \item $A$ is PI if and only if $H$ is PI;
  \item $A$ is noetherian if $H$ is noetherian; 
  \item $H$ is noetherian if $\gr(A)$ is noetherian;
  \item $H$ is not primitive.
\end{enumerate}\end{prop}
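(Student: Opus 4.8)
The plan is to prove each part by moving to the localization $H[x_0\inv] \iso A[x_0^{\pm 1}]$ afforded by Lemma \ref{loclem}, and using the fact that $x_0$ is a central non-zero-divisor in $H$ (so $H$ embeds in $H[x_0\inv]$) together with the grading on $H$. For (1), I would argue that $H$ is prime iff $H[x_0\inv]$ is prime (localization at a central Ore set of non-zero-divisors preserves primeness), and similarly $A$ is prime iff $A[x^{\pm1}_0]$ is prime since adjoining a central invertible indeterminate corresponds to a polynomial/Laurent extension; then invoke the isomorphism $H[x_0\inv]\iso A[x_0^{\pm 1}]$. Actually, the cleanest route for ``$H$ prime $\Rightarrow$ $A$ prime'' is: $A \iso H/(x_0-1)H$, but primeness does not descend to quotients in general, so instead I would use that a connected graded algebra that is a domain in low dimensions — or more robustly, use that $H$ prime implies $H[x_0\inv]\iso A[x^{\pm1}_0]$ prime implies $A$ prime (as $A$ is a subring of $A[x^{\pm1}_0]$). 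For the reverse direction, if $A$ is prime then $A[x_0^{\pm1}]$ is prime, hence $H[x_0\inv]$ is prime, hence $H$ is prime since $H \hookrightarrow H[x_0\inv]$.

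For (2), the PI property is preserved under localization and under adjoining central (Laurent) polynomial variables in both directions, so the same chain $A$ PI $\iff A[x_0^{\pm1}]$ PI $\iff H[x_0\inv]$ PI $\iff H$ PI goes through; here even the descent $H[x_0\inv]$ PI $\Rightarrow H$ PI is automatic since a subring of a PI ring is PI, and $A$ PI $\Rightarrow A[x^{\pm 1}_0]$ PI uses that PI degree is unaffected by a central variable. For (3) and (4), I would use: $H$ noetherian $\Rightarrow H[x_0\inv]\iso A[x^{\pm1}_0]$ noetherian (localization) $\Rightarrow A$ noetherian (a ring whose Laurent extension is noetherian is noetherian, e.g.\ by Hilbert basis-type arguments or since $A = A[x_0^{\pm1}]/(x_0-1)$ is a quotient); and for (4), $\gr(A)$ noetherian is a standard sufficient condition for $R(A)$ — hence $H(A)$, which equals $R(A)$ under our standing hypothesis that $x_0$ is a non-zero-divisor — to be noetherian, since a filtered ring with noetherian associated graded ring is noetherian and the Rees ring inherits this.

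The main obstacle is part (5): showing $H$ is never primitive. The key point is that $H$ is connected graded with a central non-nilpotent element $x_0$ of positive degree, and $x_0$ lies in the Jacobson radical of every graded-local picture — more precisely, any primitive ideal $P$ of $H$ would have to be the annihilator of a simple module, but $x_0$ central forces $x_0$ to act as a scalar on any simple module; if that scalar is nonzero then $P$ contains the graded-irrelevant behaviour incompatibly, and if it is zero then $P \supseteq x_0 H$, so $P$ corresponds to a primitive ideal of $H/x_0H \iso \gr(A)$, which is connected graded of GK-dimension $\geq 1$ and hence not primitive (a connected graded algebra of positive GK dimension has no primitive ideals of finite codimension and, being graded, cannot be primitive itself unless it is a field). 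I would make this precise by: first, noting that for a simple $H$-module $S$, Schur's lemma (using $\K$ algebraically closed and a countable-dimensionality/GK argument) gives $x_0 \cdot S = \lambda S$ for some $\lambda \in \K$; second, ruling out $\lambda \neq 0$ because then $S$ is a module over $H[x_0\inv]\iso A[x^{\pm1}_0]$, which has a central polynomial subring $\K[x_0^{\pm1}]$ acting non-scalarly — contradiction, or alternatively because $H$ being graded with $\mathrm{Ann}(S)$ not graded still forces $x_0 - \lambda \in \mathrm{Ann}(S)$ with $x_0-\lambda$ not homogeneous, and the ideal it generates meets every graded piece, which is incompatible with $H/\mathrm{Ann}(S)$ being a dense ring of operators of the right ``size''; and third, handling $\lambda = 0$ via the descent to $\gr(A)$ and the classical fact that a finitely generated connected graded algebra of GK-dimension $\geq 1$ is not primitive. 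The delicate part is the $\lambda \neq 0$ case, and I expect to resolve it by the observation that $H$ graded plus $x_0 - \lambda$ central of nonzero scalar-shifted degree generates an ideal containing elements of all degrees, so $H/(x_0-\lambda)H \iso A$ would have to be primitive with the extra constraint — ultimately reducing (5) to the statement that $A$ essentially regular of dimension $d \geq 1$ is not primitive because its homogenization being a domain of GK-dimension $\geq 2$ forbids finite-dimensional faithful-ish behaviour; I would cite the standard result that connected graded algebras of GK-dimension at least one are not primitive and transfer it across $x_0$.
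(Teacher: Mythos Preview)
Your arguments for (1)--(4) are correct and close in spirit to the paper's, though the paper is terser: for (2) it invokes Proposition~\ref{cntr} (i.e.\ $\cnt(H)=H(\cnt(A))$, so one center is module-finite over a polynomial subring iff the other is) rather than your localization chain, for (3) it simply notes $A\iso H/(x_0-1)H$ is a factor ring, and for (4) it cites \cite{atv}, Lemma~8.2, which is the filtered--graded/Rees argument you sketch.

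Your treatment of (5), however, contains a genuine error. You correctly isolate the key mechanism: $x_0$ is central, and over an uncountable algebraically closed field Schur's lemma forces $x_0$ to act by some scalar $\lambda$ on any simple $H$-module $S$. But then the argument is finished in one line: $0\neq x_0-\lambda\in\Ann(S)$, so no simple module is faithful and $H$ is not primitive. Instead you attempt, in the $\lambda\neq 0$ case, to reduce to the claim that $A$ itself is not primitive---and that claim is false. The first Weyl algebra $\fwa$ is $2$-dimensional essentially regular and is primitive (indeed simple), so any argument that concludes ``$A$ is not primitive'' must fail. Your alternative suggestion that ``$\K[x_0^{\pm1}]$ acts non-scalarly'' on $S$ is also backwards: once $x_0$ acts by $\lambda$, the whole Laurent ring acts by scalars. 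The paper avoids all of this by citing \cite{kirkkuz}, Proposition~3.2, which packages exactly the Schur-plus-nontrivial-center argument: an affine algebra over an uncountable algebraically closed field whose center strictly contains $\K$ (here $\K[x_0]\subset\cnt(H)$) cannot be primitive. Note, incidentally, that uncountability of $\K$ is genuinely used here, even though the paper's standing hypotheses in Section~\ref{props} only assert that $\K$ is algebraically closed of characteristic zero.
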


\begin{proof}(1) is well-known since $x_0$ is central and not a zero divisor. (2) is a consequence of Proposition \ref{cntr}. (3) is clear because $A$ is a factor algebra of $H$ and (4) follows from \cite{atv}, Lemma 8.2. The algebra $H$ is affine over the uncountable, algebraically closed field $\K$, so (5) follows from \cite{kirkkuz}, Proposition 3.2.\end{proof}

Let $F$ be the set of relations defining $A$ and let $R$ be the relations $F$ filtered by degree. There is a canonical surjection $\gr(A) \rightarrow B :=  \K\langle x_1,\hdots x_n \mid R\rangle$. We say $A$ is a \textit{Poincar\'{e}-Birkhoff-Witt (PBW) deformation} of $\gr(A)$ if that map is an isomorphism. The next proposition shows that essentially regular algebras are equivalent to PBW deformations of regular algebras.

\begin{prop}\label{grprop}An algebra $A$ is essentially regular if and only if $\gr(A)$ is regular. Moreover, if $A$ is essentially regular, then it is a PBW deformation of $\gr(A)$.\end{prop}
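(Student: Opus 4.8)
The plan is to move the AS-regularity property back and forth among $H := H(A)$, the Rees ring $R := R(A)$, and $\gr(A)$, using that $x_0$ is a central element of degree one. Three facts organize the argument. First, $x_0$ is \emph{always} a nonzerodivisor in $R$, with $R/x_0 R \iso \gr(A)$ and $R/(x_0 - 1)R \iso A$. Second, there is a canonical graded surjection $\pi \colon H \twoheadrightarrow R$, and $\pi$ is an isomorphism precisely when $x_0$ is a nonzerodivisor in $H$ (\cite{wu}, Proposition 2.6; the reverse implication is clear because $x_0$ is a nonzerodivisor in $R$). Third, for any connected graded algebra $T$ with a central nonzerodivisor $z \in T_1$, one has $\gld(T) = \gld(T/zT) + 1$ and $\gk(T) = \gk(T/zT) + 1$, and $T$ is AS-Gorenstein if and only if $T/zT$ is AS-Gorenstein; hence $T$ is regular of dimension $e + 1$ if and only if $T/zT$ is regular of dimension $e$. (This is the regular central extension mechanism of \cite{bruyn} and \cite{casshel}; the homological half comes from lifting a minimal free resolution of $\K$ over $T/zT$ along $0 \to T(-1) \xrightarrow{z} T \to T/zT \to 0$.) Taking $T = R$ and $z = x_0$ in the third fact already yields: $\gr(A)$ is regular of dimension $d$ if and only if $R$ is regular of dimension $d + 1$.

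I would next observe that the conditions ``$x_0$ is a nonzerodivisor in $H$'', ``$\pi$ is an isomorphism'', and ``$A$ is a PBW deformation of $\gr(A)$'' are equivalent. Setting $x_0 = 0$ in the presentation of $H$ from Definition \ref{hom-def} identifies $H/x_0 H$ with the graded algebra $B$ on $x_1, \dots, x_n$ modulo the top-degree homogeneous components of the relations $f_j$ --- that is, with the algebra appearing in the definition of a PBW deformation --- and the reduction of $\pi$ modulo $x_0$ is exactly the canonical comparison between $B$ and $\gr(A)$. If $\pi$ is an isomorphism so is this comparison, so $A$ is a PBW deformation; conversely, if the comparison is an isomorphism then $\ker \pi \subseteq x_0 H$, and since $x_0$ is a nonzerodivisor in $R$ we get $\ker\pi = x_0 \ker\pi = x_0^2\ker\pi = \cdots \subseteq \bigcap_n x_0^n H = 0$ by connectedness, so $\pi$ is an isomorphism.

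For the forward implication, let $A$ be essentially regular, so $H$ is regular of dimension $d + 1$; by our standing assumption (automatic when $d + 1 \leq 4$, such algebras being domains by \cite{atv2}, Theorem 3.9) $x_0$ is a nonzerodivisor in $H$. Then $\pi$ is an isomorphism, so $A$ is a PBW deformation of $\gr(A)$ --- this is the ``moreover'' --- and the third fact applied to $R \iso H$ shows $\gr(A) = H/x_0 H$ is regular of dimension $d$. For the converse, let $\gr(A)$ be regular of dimension $d$, so that $R$ is regular of dimension $d + 1$; it remains to show $\pi$ is an isomorphism, equivalently that the $f_j$ form a Gr{\"o}bner basis of the ideal they generate with respect to the degree filtration, so that no leading form is lost in $\gr(A)$. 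I would do this by comparing Hilbert series: $h_R(t) = h_{\gr(A)}(t)/(1-t)$, and since $\gr(A)$ is AS-regular its Hilbert series is $1/q(t)$ for a polynomial $q$ with $q(0) = 1$ read off from the (finite, self-dual) minimal free resolution of $\K$ over $\gr(A)$, which in particular pins down the degrees and minimal number of its relations. One then verifies --- by carrying out the overlap (diamond) analysis for the presentation of $H$ --- that $h_H(t) = h_R(t)$; since $H \twoheadrightarrow R$, this forces $\pi$ to be an isomorphism, whence $H \iso R$ is regular of dimension $d + 1$ and $A$ is essentially regular.

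The main obstacle is exactly this Hilbert-series / Gr{\"o}bner-basis step of the converse: deducing from regularity of $\gr(A)$ that the defining relations of $A$ already ``close up'' --- equivalently that $x_0$ is a nonzerodivisor in $H$, equivalently that $H$ has the same Hilbert series as $R$. The soft half is free (from $H \twoheadrightarrow R$ one gets $h_H \geq h_R$ coefficientwise, and $h_R = h_{\gr(A)}/(1-t)$); the content is the matching upper bound, which must exploit the rigidity that a finite self-dual resolution imposes on the Hilbert series of $\gr(A)$, played against the fact that $H$ is presented by the homogenized relations together with the central commutators $x_0 x_i - x_i x_0$. Everything else --- the transfer of $\gld$, $\gk$, and the AS-Gorenstein property along the central degree-one nonzerodivisor $x_0$, and the equivalence of the PBW condition with $x_0$ being a nonzerodivisor in $H$ --- is formal, resting only on the exact sequences $0 \to T(-1) \xrightarrow{x_0} T \to T/x_0 T \to 0$ for $T = H$ and $T = R$.
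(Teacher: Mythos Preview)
Your overall architecture---routing through the Rees ring $R$, where $x_0$ is automatically regular, and treating ``$\pi\colon H\to R$ is an isomorphism'' as the hinge---is correct and in some ways tidier than the paper's. The paper works directly with $H$ and $\gr(A)$: it asserts at the outset that $x_0$ is a nonzerodivisor in $H$ (so that $H/x_0H=\gr(A)$), then applies the Rees Lemma (\cite{rotman}, Theorem~8.34) to match $\Ext^\ast(\K,-)$ and $\pd(\K)$, hence the AS-Gorenstein condition and $\gld$; for GK dimension it uses $0\to x_0H\to H\to\gr(A)\to 0$ in one direction and, in the other, Lemma~\ref{loclem} together with preservation of GK dimension under localization at a central regular element (\cite{mcrob}, Proposition~8.2.13), obtaining $\gk(H)=\gk(H[x_0^{-1}])=\gk(A[x_0^{\pm 1}])=\gk(A)+1$. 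The PBW conclusion is not argued internally at all: it is obtained by citing \cite{cassidy}, Theorem~1.3. So where you package the transfer of all three properties into the single ``regular central extension'' principle and then try to prove PBW by hand, the paper separates the three transfers and outsources PBW to a reference.

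Your acknowledged gap is real, and it is exactly the step the paper does \emph{not} attempt. Saying ``one then verifies---by carrying out the overlap (diamond) analysis for the presentation of $H$---that $h_H(t)=h_R(t)$'' is not a proof: the diamond-lemma computation for $H$ depends on the particular relations $\hat f_j$, not merely on the Hilbert series of $\gr(A)$, and regularity of $\gr(A)$ gives you no a~priori control over those overlaps. The inequality you do have, $h_H\ge h_R$ coefficientwise from the surjection $H\twoheadrightarrow R$, is the easy half; nothing in your outline supplies the matching upper bound. To close the converse you should either invoke an external result along the lines of what the paper cites from \cite{cassidy}, or give an independent argument that regularity of $\gr(A)$ forces the leading-form algebra $H/x_0H$ to coincide with $\gr(A)$ (equivalently, that $x_0$ is regular in $H$). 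Without that, your converse is complete only for $A$ already known to satisfy the PBW condition---which, to be fair, is effectively the standing hypothesis the paper also leans on.
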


\begin{proof}Let $B=\gr(A)$ and $H=H(A)$. Since $x_0\in H$ is central and not a zero divisor, then by the Rees Lemma (\cite{rotman}, Theorem 8.34), $\Ext_B^n(\K_B,B) \iso \Ext_{H}^{n+1}(\K_H,H)$. Hence, $B$ is AS-Gorenstein if and only if $H$ is. Moreover, since $B$ (resp. $H$) is graded, then $\gld(B)=\pd(\K_B)$ (resp. $\gld(H)=\pd(\K_H)$). By the Rees Lemma, $\gld(B)=d$ if and only if $\gld(H)=d+1$. The sequence $0 \rightarrow x_0H \rightarrow H \rightarrow B \rightarrow 0$ is exact, so $\gk(B) \leq \gk(H)-1 < \infty$ when $H$ is regular. Conversely, if $B$ is regular, then $\gk(A)=\gk(B) < \infty$. Localization at the central regular element $x_0$ in $H$ and in $A[x_0]$ preserves GK dimension (\cite{mcrob}, Proposition 8.2.13). This, combined with Lemma \ref{loclem}, gives, \[\gk(H) = \gk(H[x_0\inv]) = \gk(A[x_0^{\pm 1}]) = \gk(A)+1 < \infty.\]
That $A$ is a PBW deformation now follows from \cite{cassidy}, Theorem 1.3.\end{proof}

\begin{cor}\label{essAS}If $A$ is regular, then $A$ is essentially regular.\end{cor}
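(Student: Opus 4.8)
The plan is to deduce Corollary \ref{essAS} directly from Proposition \ref{grprop}. That proposition asserts that $A$ is essentially regular if and only if $\gr(A)$ is regular. So it suffices to show that whenever $A$ is itself a regular (connected graded) algebra, its associated graded algebra $\gr(A)$ with respect to the standard degree filtration is isomorphic to $A$.

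The key observation is that for a graded algebra the standard filtration $\F_\ell(A) = \bigoplus_{i \le \ell} A_i$ is the filtration induced by the grading, and the associated graded algebra of a graded algebra with respect to its own grading filtration recovers the algebra: $\F_\ell(A)/\F_{\ell-1}(A) \iso A_\ell$, and the multiplication on $\gr(A)$ is exactly the multiplication on $A$ restricted to homogeneous components. Concretely, the paper has already defined "$A$ is graded" to mean $\gr(A) = A$, and a regular algebra is by definition connected graded, hence graded in this sense. First I would simply invoke this: if $A$ is regular, then $A$ is connected graded, so $\gr(A) = A$, and therefore $\gr(A)$ is regular. Applying Proposition \ref{grprop} in the "if" direction then gives that $A$ is essentially regular.

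One small point worth spelling out is that being "regular" as stated requires the algebra to be connected graded, and the defining relations in presentation (\ref{form}) are assumed to have degree-$1$ generators; a regular algebra presented in this way automatically has the property that its relations are already homogeneous, so the filtered relations $R$ coincide with $F$ and $B = \K\langle x_1,\dots,x_n \mid R\rangle = A$, making the PBW condition automatic as well. Thus the "moreover" clause of Proposition \ref{grprop} also applies trivially, though it is not needed for the statement of the corollary.

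I do not anticipate a genuine obstacle here — the corollary is essentially a sanity check that the new notion of "essentially regular" generalizes "regular." The only thing to be careful about is the convention clash noted right after the example in the excerpt: for the homogenization to behave well one assumes $x_0$ is not a zero divisor in $H(A)$. When $A$ is already regular this is harmless, since then $H(A) \iso A[x_0]$ (the homogenization of a graded algebra just adjoins a central polynomial variable), in which $x_0$ is manifestly a nonzerodivisor; so the standing assumption is consistent with the hypothesis of the corollary. Hence the proof is a one-line appeal to Proposition \ref{grprop} together with the tautology $\gr(A) = A$ for graded $A$.
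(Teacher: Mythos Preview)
Your proposal is correct and matches the paper's intent: the corollary is stated without proof immediately after Proposition~\ref{grprop}, and the implicit argument is exactly the one you give---since a regular algebra is connected graded, $\gr(A)=A$ is regular, so Proposition~\ref{grprop} applies. Your additional remarks about the PBW clause and $x_0$ being a nonzerodivisor are accurate but unnecessary for the bare statement.
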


\begin{cor}If $A$ is a noetherian essentially regular algebra, then $A$ has finite global and GK dimension.\end{cor}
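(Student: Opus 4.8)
The plan is to derive both finiteness statements by comparison with $H := H(A)$, which by hypothesis is regular of dimension $d+1$.

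Finiteness of $\gk(A)$ is essentially already proved: in the course of proving Proposition \ref{grprop} one shows, via Lemma \ref{loclem} and the fact that localization at the central regular element $x_0$ preserves GK dimension, that $\gk(H) = \gk(A)+1$. Since $H$ is regular, $\gk(H)<\infty$, and hence $\gk(A)<\infty$; the noetherian hypothesis is not actually needed here.

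For global dimension the idea is to interpose the localization $S := H[x_0\inv]$. As $x_0$ is central and regular in $H$, each $S$-module is isomorphic to its own localization at $x_0$ and a projective $H$-resolution localizes to a projective $S$-resolution of the same length, so $\gld(S) \le \gld(H) = d+1 < \infty$. By Lemma \ref{loclem}, $S \iso A[x_0^{\pm 1}]$, the Laurent polynomial ring over $A$ in the central variable $x_0$; in it $x_0-1$ is a central non-zero-divisor with $S/(x_0-1)S \iso A$, compatibly with the identification $A \iso H/(x_0-1)H$ recalled in the text. Fix any left $A$-module $M$ and set $\widetilde M := S \otimes_A M$. A one-line degree count in the Laurent variable $x_0$ shows $x_0-1$ is a non-zero-divisor on $\widetilde M$, and $\widetilde M/(x_0-1)\widetilde M \iso M$ because $x_0-1$ is central. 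Resolving the right $S$-module $A$ by $0 \to S \xrightarrow{\,\cdot(x_0-1)\,} S \to A \to 0$ gives $\operatorname{Tor}^S_i(A,\widetilde M)=0$ for $i \ge 1$ and $\operatorname{Tor}^S_0(A,\widetilde M) \iso M$, so applying $A \otimes_S -$ to a projective $S$-resolution of $\widetilde M$ of length $\pd_S(\widetilde M)$ produces a projective $A$-resolution of $M$ of the same length. Hence $\pd_A(M) \le \pd_S(\widetilde M) \le \gld(S)$; as $M$ was arbitrary, $\gld(A) \le \gld(S) \le \gld(H) = d+1 < \infty$.

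I do not expect a serious obstacle. The only thing needing care is that every module-theoretic step — localizing $H$ at $x_0$ and then killing the central non-zero-divisor $x_0-1$ in $S = A[x_0^{\pm 1}]$ — behaves exactly as in the commutative case, which is guaranteed by the centrality of $x_0$ and of $x_0-1$. The resulting bound $\gld(A) \le \gld(H) = d+1$ need not be an equality (the first Weyl algebra has global dimension $1$ but is essentially regular of dimension $2$), and the noetherian hypothesis appears to be included mainly to line the statement up with the standard finiteness theorems rather than because the argument above requires it.
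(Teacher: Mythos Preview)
Your argument is correct. For GK dimension you match the paper exactly. For global dimension you take a genuinely different route: the paper works through $\gr(A)$, invoking the standard filtered-to-graded bound (McConnell--Robson, Corollary 6.18) to conclude $\gld(A)\le\gld(\gr(A))<\infty$ directly from the regularity of $\gr(A)$ established in Proposition~\ref{grprop}. You instead pass through $H(A)$, localize at $x_0$, and descend along the central regular element $x_0-1$ in $A[x_0^{\pm1}]$. Your approach is more self-contained (no black-box citation) and makes clear why the noetherian hypothesis is not really needed; the paper's approach is shorter and yields the slightly sharper bound $\gld(A)\le d$ rather than your $\gld(A)\le d+1$, and it is where the noetherian assumption is actually invoked, since the filtered-to-graded comparison in McConnell--Robson is stated under a noetherian hypothesis.
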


\begin{proof}By \cite{mcrob}, Corollary 6.18, and because $\gr(A)$ is regular, $\gld A \leq \gld \gr(A) < \infty$. The statement on GK dimension follows from the proof of Proposition \ref{grprop}.\end{proof}

\begin{cor}\label{polyext}Let $\xi$ be a central indeterminate over an algebra $A$. The polynomial ring $A[\xi]$ is essentially regular of dimension $d$ if and only if $A$ is essentially regular of dimension $d-1$.\end{cor}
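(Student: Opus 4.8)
The plan is to reduce the statement to the corresponding fact about homogenizations (equivalently, via Proposition \ref{grprop}, about associated graded algebras) and then invoke the standard observation that adjoining a central degree-one indeterminate to a connected graded algebra shifts regularity by one in dimension.

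First I would identify $H(A[\xi])$ with $H(A)[\xi]$. Writing $A$ as in (\ref{form}), the algebra $A[\xi]$ is presented on generators $x_1,\dots,x_n,\xi$ with the relations $f_1,\dots,f_m$ together with $\xi x_i - x_i\xi$ for all $i$. Upon homogenizing, the relations $\xi x_i - x_i\xi$ are already homogeneous and involve no $x_0$, so $H(A[\xi])$ is the algebra on $x_0,x_1,\dots,x_n,\xi$ with the relations $\hat f_i$, the relations $x_0x_j - x_jx_0$, and the relations making $\xi$ a central indeterminate; that is, $H(A[\xi]) \iso H(A)[\xi]$ with $\xi$ central of degree one. (Equivalently, $\gr(A[\xi]) \iso \gr(A)[\xi]$.) One should check here that the standard filtration on $A[\xi]$ has associated graded exactly $\gr(A)[\xi]$, i.e. that no extra leading-term relations appear, but this is immediate since $\xi$ is a free central variable.

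Next I would record the polynomial-extension fact: if $B$ is a connected graded algebra and $\xi$ is a central indeterminate of degree one, then $B[\xi]$ is regular of dimension $e$ if and only if $B$ is regular of dimension $e-1$. Since $\xi$ is central, homogeneous, not a zero divisor, and $B[\xi]/\xi B[\xi] \iso B$, the Rees Lemma (exactly as in the proof of Proposition \ref{grprop}) gives $\Ext_B^n(\K_B,B) \iso \Ext_{B[\xi]}^{n+1}(\K_{B[\xi]},B[\xi])$, so $B$ is AS-Gorenstein if and only if $B[\xi]$ is, and $\gld B = e-1$ if and only if $\gld B[\xi] = e$. Finiteness of GK dimension is preserved since $\gk(B[\xi]) = \gk(B)+1$.

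Finally I would chain the equivalences: $A[\xi]$ is essentially regular of dimension $d$ $\iff$ $H(A[\xi]) \iso H(A)[\xi]$ is regular of dimension $d+1$ $\iff$ $H(A)$ is regular of dimension $d$ $\iff$ $A$ is essentially regular of dimension $d-1$, where the first and last equivalences are the definition and the middle one is the polynomial-extension fact applied to $B = H(A)$ (using the standing assumption that the homogenizing variable is not a zero divisor). The only genuinely delicate point is the identification $\gr(A[\xi]) \iso \gr(A)[\xi]$, or equivalently $H(A[\xi]) \iso H(A)[\xi]$; everything else is bookkeeping with results already in hand.
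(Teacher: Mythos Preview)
Your proposal is correct and follows exactly the paper's approach: the paper's proof is the one-line observation that $H(A[\xi]) = H(A)[\xi]$ together with the fact that regularity is preserved under polynomial extensions. You have simply filled in the details of both of these assertions, the first via generators and relations and the second via the Rees Lemma argument already used in Proposition~\ref{grprop}.
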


\begin{proof}We need only observe that $H(A[\xi])=H(A)[\xi]$ and that regularity is preserved under polynomial extensions.\end{proof}

In the next section we observe that essential regularity is preserved under certain skew polynomial extensions.

We end this section with a classification of examples of essentially regular algebras in dimension two and several examples of those of dimension three. If $A$ is essentially regular of dimension two, then $H=H(A)$ is dimension three regular. Hence, $H$ either has three generators subject to three quadratic relations, or else it has two generators subject to two cubic relations. Since $H$ is a homogenization, then the commutation relations of $x_0$ give two quadratic relations, so there must be some presentation in the first form. Since $A \iso H/(x_0-1)H$, then the commutation relations drop off and we are left with one quadratic relation. Thus, to determine the 2-dimensional essentially regular algebras, we begin with a list of all algebras defined by two generators subject to a single quadratic relation. An easy consequence is that a subset of this list contains the 2-dimensional essentially regular algebras.

\begin{thm}[\cite{twogen}]\label{classification} Suppose $A \iso \K\langle x,y \mid f \rangle$ where $f$ is a polynomial of degree two. Then $A$ is isomorphic to one of the following algebras:
\begin{align*}
  &\qp, f=xy-qyx ~ (q \in \K^\times), & ~ & \wa, f=xy-qyx+1 ~ (q \in \K^\times), \\
  &\jp,  f=yx-xy+y^2, & ~ & \sjp, f=yx-xy+y^2+1, \\
  &\env, f=yx-xy+y, & ~ & \kx, f= x^2-y, \\
  &\xx, f=x^2, & ~ & \sxx, f=x^2-1,\\
  &\yx, f=yx, & ~ & \os, f=yx-1.
\end{align*}
Furthermore, the above algebras are pairwise non-isomorphic, except \[\qp \iso \mathcal{O}_{q\inv}(\K^2) \text{ and } \wa \iso A_1^{q\inv}(\K).\]\end{thm}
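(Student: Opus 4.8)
The plan is to classify $f$ in two stages: first normalize its degree-two part under linear changes of the generators (and rescaling of the relation), then normalize the surviving linear and constant terms; afterwards separate the resulting algebras by invariants.

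Write $f = f_2 + f_1 + f_0$ for the decomposition into homogeneous pieces, with $f_2 \neq 0$ since $\deg f = 2$. Any invertible affine substitution of $x,y$ together with rescaling of $f$ by a nonzero scalar induces an isomorphism of presentations. The translation part does not change $f_2$; identifying $f_2$ with a $2\times2$ matrix, a linear substitution with matrix $M$ acts by the congruence $f_2 \mapsto M f_2 M^t$, and the scalar rescales it. I would first run through the congruence-and-scaling orbits of nonzero $2\times2$ matrices via the symmetric/antisymmetric decomposition: the nondegenerate orbits are exactly (normalized forms of) the defining relations of the $2$-dimensional regular algebras, namely the quantum-plane relations $xy - qyx$ for $q \in \K^\times$ — with $q$ and $q^{-1}$ yielding congruent orbits via the swap $x \leftrightarrow y$ — and the Jordan relation $yx - xy + y^2$; the degenerate orbits are represented by $x^2$ and by $xy$ (the latter congruent to $yx$). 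This stage is pure linear algebra together with the classical classification of regular algebras of dimension two.

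Next, for each normal form of $f_2$ I would exhaust the stabilizer of $f_2$ in $\GL_2(\K)$, the translations, and the rescaling, to simplify $f_1$ and $f_0$. When $f_2 = xy - qyx$ with $q \neq 1$, translating $x \mapsto x+s$, $y \mapsto y+t$ alters $f$ by $(1-q)(tx+sy+st)$, which clears $f_1$ and leaves a constant equal to $0$ (giving $\qp$) or, after rescaling, $1$ (giving $\wa$). When $q = 1$ translations fix $f_2$, but $xy-yx$ is now $\GL_2(\K)$-semi-invariant; one checks a nonzero $f_1$ can be rotated and rescaled to $y$ and the constant cleared, giving $\env$, while $f_1 = 0$ gives $\K[x,y]$ or $\fwa$ (the cases $q=1$ of $\qp$ and $\wa$). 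When $f_2 = yx - xy + y^2$, I would first use $y \mapsto y+t$ to kill the $y$-coefficient of $f_1$; if the surviving $x$-coefficient $\alpha$ is nonzero, the quadratic substitution $w = y^2 + \alpha x + f_0$ rewrites the relation as $yw - wy + \alpha w$, which is $\env$ once more, and if it vanishes one is left with $yx - xy + y^2 + f_0$, i.e. $\jp$ ($f_0 = 0$) or, after rescaling, $\sjp$ ($f_0 \neq 0$). When $f_2 = x^2$ the relation involves $x$ alone: if $f_1$ has a $y$-component the relation reduces to $x^2 - y$ (i.e. $\kx$), and otherwise $x^2 + f_1 + f_0$ is a quadratic polynomial in $x$, brought by an affine substitution in $x$ to $x^2$ or $x^2 - 1$, giving $\xx$ or $\sxx$. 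When $f_2 = yx$, translating clears $f_1$ and leaves $yx$ ($\yx$) or, after rescaling, $yx - 1$ ($\os$).

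Finally I would show the listed algebras are pairwise non-isomorphic apart from $\qp \cong \mathcal{O}_{q^{-1}}(\K^2)$ and $\wa \cong A_1^{q^{-1}}(\K)$, which follow at once from $x \leftrightarrow y$ together with a rescaling. Here one separates by invariants: Gelfand--Kirillov dimension isolates $\kx$ (dimension one) and $\xx, \sxx$ (exponential growth); the presence of a nonzero nilpotent, or of a nontrivial idempotent, distinguishes $\xx$ from $\sxx$ and $\os$ from $\yx$; being a domain rules out coincidences such as $\qp$ ($q \neq 1$) versus $\yx$, or $\wa$ ($q \neq 1$) versus $\os$; and the abelianization — a point, two lines meeting, two parallel lines, a smooth conic, or the zero ring — handles the rest, in particular detecting $\fwa$. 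That distinct parameters $q$ (up to inversion) give non-isomorphic $\qp$, and similarly for $\wa$, follows because $q$ is recovered intrinsically as the scalar by which conjugation by a normal element acts, which needs the description of the normal elements of these algebras. The two steps I expect to require the most care are this last invariant, and the earlier realization that the a priori parametrized linear perturbations of the $q=1$ and Jordan relations collapse onto $\env$ via a nonlinear substitution rather than producing new algebras.
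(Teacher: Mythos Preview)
The paper does not prove this theorem at all: it is quoted verbatim from \cite{twogen} and stated without argument, serving purely as input to Corollary~\ref{eclass}. There is therefore no in-paper proof to compare your proposal against.

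That said, your outline is the standard two-stage strategy and is essentially correct. A few places would need tightening. Your inventory of abelianizations (``a point, two lines meeting, two parallel lines, a smooth conic, or the zero ring'') does not match the actual list: $\env$ abelianizes to a single line $\K[x]$, $\jp$ to the non-reduced scheme $\K[x,y]/(y^2)$, and $\mathcal{O}_1(\K^2)$ to the full plane, so the cases need to be enumerated more carefully---though they do remain pairwise non-isomorphic commutative rings, so the invariant still separates. For $\yx$ versus $\os$, the cleanest justification is that $\yx$ is connected graded and hence has only trivial idempotents, while $xy$ is a nontrivial idempotent in $\os$. Finally, in the Jordan case with a surviving $x$-term, your nonlinear substitution $w=y^2+\alpha x+f_0$ is the right idea and does yield the $\env$ relation $yw-wy+\alpha w=0$; to see that the resulting surjection $\env\twoheadrightarrow A$ is an isomorphism you should note that both sides are Ore extensions of $\K[y]$ (so domains of GK~dimension two), whence any proper quotient would drop GK~dimension.
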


\begin{cor}\label{eclass}The dimension two essentially regular algebras are \[ \qp, \wa, \jp, \sjp, \env.\]\end{cor}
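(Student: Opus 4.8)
The plan is to reduce everything to Proposition \ref{grprop}: an algebra $A$ is essentially regular of dimension $2$ precisely when $\gr(A)$ is regular of global dimension $2$. So I would run through the list in Theorem \ref{classification} and, for each algebra, identify $\gr(A)$ and decide whether it is regular.

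First I would record the associated graded algebras. Since $x$ and $y$ both have degree $1$, $\gr(A)$ is the quotient of $\K\langle x,y\rangle$ by the degree-two component (leading form) of the defining relation $f$. Thus $\gr(\qp)=\qp$ and $\gr(\jp)=\jp$, the defining relations being already homogeneous; $\gr(\wa)=\qp$, $\gr(\sjp)=\jp$, and $\gr(\env)=\K[x,y]$ (the commutative polynomial ring, i.e.\ the $q=1$ quantum plane), since in each of these the lower-order terms of $f$ drop off; $\gr(\kx)=\gr(\xx)=\gr(\sxx)=\K\langle x,y\mid x^2\rangle$; and $\gr(\yx)=\gr(\os)=\K\langle x,y\mid yx\rangle$.

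Next, for the five algebras $\qp,\wa,\jp,\sjp,\env$ the associated graded ring is a quantum plane $\qp$ with $q\in\K^\times$ or the Jordan plane $\jp$, and these are classically known to be AS-regular of global dimension $2$ (they appear already in \cite{artsch}); hence by Proposition \ref{grprop} these five algebras are essentially regular of dimension $2$. For the remaining five it suffices to observe that $\K\langle x,y\mid x^2\rangle$ is not a domain (the image of $x$ is a nonzero square-zero element) and neither is $\K\langle x,y\mid yx\rangle$ (the images of $x$ and $y$ are nonzero zero divisors). By \cite{atv2}, Theorem 3.9, every regular algebra of dimension at most four is a domain, so neither of these monomial algebras is regular, and therefore $\kx,\xx,\sxx,\yx,\os$ are not essentially regular. (For $\kx$ this recovers the non-regularity of $H(\kx)$ computed earlier; alternatively, $\K\langle x,y\mid x^2\rangle$ has exponential growth, hence infinite $\gk$.) Since this exhausts the list of Theorem \ref{classification}, the corollary follows.

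I do not expect any real obstacle: once Proposition \ref{grprop} is in hand the argument is a short finite case check. The only steps requiring a little attention are correctly reading off the five "bad" associated graded algebras and invoking the domain property of low-dimensional regular algebras to eliminate them; the positive cases rest only on the well-known regularity of the quantum and Jordan planes.
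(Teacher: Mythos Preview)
Your proposal is correct and follows essentially the same route as the paper: reduce via Proposition~\ref{grprop} to checking whether $\gr(A)$ is regular, note that the associated graded algebras of the ten cases are $\qp$, $\jp$, $\xx$, or $\yx$, and invoke the regularity of the quantum and Jordan planes from \cite{artsch} together with the domain property of low-dimensional regular algebras \cite{atv2} to separate the two groups. The paper's write-up is terser---it does not spell out each $\gr(A)$ explicitly---but the argument is the same.
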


\begin{proof}The algebras $\qp$ and $\jp$ are 2-dimensional regular \cite{artsch}. On the other hand, $\xx$ and $\yx$ are not domains and therefore not regular \cite{atv2}. Therefore, $\wa$, $\sjp$ and $\env$ are essentially regular of dimension two whereas $\kx$, $\sxx$ and $\os$ are not by Proposition \ref{grprop}.\end{proof}

In the following, we collect a few examples of 3-dimensional essentially regular algebras. We have already observed that if $H$ is 3-dimensional regular, then $H$ is 3-dimensional essentially regular (Corollary \ref{essAS}) and if $A$ is 2-dimensional regular, then $A[\xi]$ is 3-dimensional essentially regular (Corollary \ref{polyext}).

\begin{ex}(\cite{lbvdb})\label{ex1} Let $L$ be the Lie algebra over $\K$ generated by $\{x_1,x_2,x_3\}$ subject to the relations $[x_i,x_j] = \sum_{k=1}^n \alpha_{ij,k} x_k$. The enveloping algebra $U(L)$ is 3-dimensional essentially regular. A special case of this is when $L=\mathfrak{sl}_2(\mathbb{C})$ (see \cite{lesmith}).\end{ex}

\begin{ex}(\cite{cassidy})\label{ex2} Essentially regular algebras need not be skew polynomial rings. The down-up algebra $A(\alpha,\beta,\gamma)$ for $\alpha,\beta,\gamma \in \K$ is defined as the $\K$-algebra on generators $d,u$ subject to the relations $d^2u = \alpha dud + \beta ud^2 + \gamma d$, $du^2 = \alpha udu + \beta u^2 d + \gamma u$. The algebra $A(\alpha,\beta,\gamma)$ is 3-dimensional essentially regular if and only if $\beta \neq 0$. More generally, PBW deformations of generic cubic regular algebras were computed by Fl{\o}ystad and Vatne in \cite{flovat}.\end{ex}

\begin{ex}\label{ex3}Let $A$ be an algebra with generators $x_0,x_1,x_2$ and let $r_i=x_i x_j - q x_j x_i + s(x_k)$, $\deg s \leq 2$, $q \in \K^\times$, where $i=0,1,2$ and $j\equiv i+1 \mod 3, k \equiv i+2 \mod 3$. Since $\gr(A)$ is a Sklyanin algebra, then $A$ is 3-dimensional essentially regular by Proposition \ref{grprop}.\end{ex}

\section{Skew Calabi-Yau algebras}\label{cy}

Closely related to regular algebras is the more recent notion of a Calabi-Yau and skew Calabi-Yau algebra. We define (skew) Calabi-Yau algebras and show that essentially regular algebras have this property. This is then used to show that the property of essential regularity is preserved under certain skew polynomial extensions.

The \textit{enveloping algebra} of $A$ is defined as $A^e:=A \tensor A^{op}$. If $M$ is both a left and right $A$-module, then $M$ is an $A^e$ module with the action given by $(a \tensor b)\cdot x = axb$ for all $x \in M$, $a,b \in A$. Correspondingly, given automorphisms $\sigma,\tau \in \Aut(A)$, we can define the twisted $A^e$-module ${^\sigma}M^\tau$ via the rule $(a \tensor b)\cdot x = \sigma(a)x\tau(b)$ for all $x \in M$, $a,b \in A$. When $\sigma$ is the identity, we omit it.

\begin{defn}An algebra $A$ is said to be \textbf{homologically smooth} if it has a finite resolution by finitely generated projectives as an $A^e$-module. The length of this resolution is known as the Hochschild dimension of $A$.\end{defn}

The Hochschild dimension of $A$ is known to coincide with the global dimension of $A$ (\cite{berger}, Remark 2.8). 

\begin{defn}An algebra $A$ is said to be \textbf{skew Calabi-Yau} of dimension $d$ if it is homologically smooth and there exists an automorphism $\tau \in \Aut(A)$ such that there are isomorphisms
\[ \Ext_{A^e}^i(A,A^e) \iso \begin{cases}0 & \text{ if } i\neq d \\ A^\tau & \text{ if } i = d.\end{cases}\]
If $\tau$ is the identity, then $A$ is said to be \textbf{Calabi-Yau}.\end{defn}

The condition on $\Ext$ in the above definition is sometimes referred to as the \textit{rigid Gorenstein} condition \cite{brzha}.

Regular algebras are important examples of (skew) Calabi-Yau algebras. This idea was formalized recently in \cite{rrz}.

\begin{thm}[Reyes, Rogalski, Zhang]\label{rrz}An algebra is connected graded Calabi-Yau if and only if it is regular.\end{thm}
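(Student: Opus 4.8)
I would interpret the statement as: a connected graded algebra $A$ is skew Calabi-Yau of dimension $d$ precisely when it is regular of dimension $d$, the genuine Calabi-Yau case being that in which the Nakayama automorphism $\tau$ may be taken to be $\id$. The plan is to prove both implications by transporting the two defining homological conditions across the change of rings between $A$ and $A^e=A\tensor A^{op}$. The bridge is the remark that once $A$ is homologically smooth --- equivalently, $A$ is a perfect object in the derived category of $A^e$-modules --- there are quasi-isomorphisms
\[ \operatorname{RHom}_{A^e}(A,A^e)\tensor^L_A\K\;\simeq\;\operatorname{RHom}_{A^e}(A,A\tensor\K)\;\simeq\;\operatorname{RHom}_A(\K,A)(\ell), \]
for a suitable internal degree shift $\ell$, together with the mirror identity obtained by tensoring with $\K$ on the left. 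Here the first isomorphism is the general fact that $\operatorname{RHom}$ out of a perfect complex commutes with $-\tensor^L_A\K$, and the second is a direct computation with the cyclic bimodule $A$; the delicate point is to keep straight the several residual one-sided $A$-actions on $\operatorname{RHom}_{A^e}(A,A^e)$ and all the grading shifts.

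For the implication ``regular $\Rightarrow$ skew Calabi-Yau'', I would first establish homological smoothness: since $A$ is connected graded, finitely generated with finite-dimensional homogeneous components, and has $\gld A=d<\infty$, its minimal free resolution $Q_\bullet\to A$ over $A^e$ satisfies that $\K\tensor_A Q_\bullet\tensor_A\K$ has zero differential, so $\operatorname{rank}Q_i=\dim\operatorname{Tor}^{A^e}_i(A,\K\tensor\K)=\dim\operatorname{Tor}^A_i(\K,\K)$, which vanishes for $i>d$; hence $Q_\bullet$ has length $d$ and $\pd_{A^e}A=d$. The AS-Gorenstein hypothesis gives $\operatorname{RHom}_A(\K,A)\simeq\K(\ell)[-d]$, so the bridge yields $\operatorname{RHom}_{A^e}(A,A^e)\tensor^L_A\K\simeq\K(\ell)[-d]$, and likewise on the other side. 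One then deduces --- using perfectness of $A$ over $A^e$, the left--right symmetry of the AS-Gorenstein condition for connected graded algebras, and a minimality argument on the perfect complex $\operatorname{RHom}_{A^e}(A,A^e)$ --- that this complex is concentrated in cohomological degree $d$, where it is an $A$-bimodule that is free of rank one on each side and hence of the form $A^\tau(\ell')$ for a graded automorphism $\tau$. Thus $A$ is skew Calabi-Yau of dimension $d$.

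For the converse, suppose $A$ is skew Calabi-Yau of dimension $d$, with a length-$d$ resolution $P_\bullet\to A$ by finitely generated projective $A^e$-modules. For any left $A$-module $M$ the complex $P_\bullet\tensor_A M$ is a projective resolution of $M$ of length at most $d$ --- each $P_i\tensor_A M$ is projective because $\K$ is a field, so $M$ is $\K$-free and $(A\tensor A)\tensor_A M\iso A\tensor_\K M$ is free as a left $A$-module --- whence $\gld A\le d<\infty$. Feeding $\operatorname{RHom}_{A^e}(A,A^e)\simeq A^\tau(\ell')[-d]$ into the bridge and using that $A^\tau\tensor_A\K\iso\K$ with vanishing higher $\operatorname{Tor}$ gives $\operatorname{RHom}_A(\K,A)\simeq\K(\ell)[-d]$, which (again by left--right symmetry) is exactly AS-Gorenstein in dimension $d$; combined with $\gld A<\infty$ this forces $\gld A=d$. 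Finally, tensoring the finite bimodule resolution of $A$ with $\K$ produces a finite minimal free resolution of $\K$, so $h_A(t)=q(t)\inv$ for an integer polynomial $q$ with $q(0)=1$, and the self-duality of the bimodule resolution imposed by the $\Ext$-condition makes $q$ essentially palindromic; a Hilbert-series argument as in \cite{rrz} then gives $\gk(A)<\infty$. Hence $A$ is regular of dimension $d$.

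The step I expect to be the main obstacle is the passage, in the first implication, from the one-sided identities $\operatorname{RHom}_{A^e}(A,A^e)\tensor^L_A\K\simeq\K(\ell)[-d]$ (and its mirror) to the conclusion that $\operatorname{RHom}_{A^e}(A,A^e)$ is concentrated in a single cohomological degree and is invertible there; morally this records the self-duality of the minimal bimodule resolution of $A$, inherited from the self-duality of the minimal free resolution of $\K_A$ supplied by the AS-Gorenstein property, but making that inheritance precise is where the real work lies. The finite-GK-dimension clause in the definition of regularity is the one part of the statement that is not pure homological algebra and requires the separate Hilbert-series estimate.
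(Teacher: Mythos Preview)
The paper does not prove this theorem. Theorem~\ref{rrz} is quoted as a result of Reyes, Rogalski, and Zhang and is used as a black box; the only ``proof'' the paper supplies is the citation \cite{rrz}. So there is nothing in the paper to compare your proposal against.

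That said, your sketch is a faithful outline of the argument that actually appears in the Reyes--Rogalski--Zhang paper. Your interpretation is also the correct one: the intended statement is that connected graded \emph{skew} Calabi--Yau is equivalent to Artin--Schelter regular (the author uses the result in exactly this form in the proof of Theorem~\ref{cyprop}, and later in Section~\ref{cy} invokes it for algebras that are regular but not in general Calabi--Yau with trivial Nakayama automorphism). Your self-identified obstacle---passing from $\operatorname{RHom}_{A^e}(A,A^e)\tensor^L_A\K\simeq\K(\ell)[-d]$ on both sides to the conclusion that $\operatorname{RHom}_{A^e}(A,A^e)$ is invertible and concentrated in degree $d$---is precisely where the substance lies in \cite{rrz}, and your description of it as ``self-duality of the minimal bimodule resolution inherited from self-duality of the minimal resolution of $\K_A$'' is the right picture. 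The finite-GK clause is, as you note, handled separately via Hilbert series (in fact Stephenson--Zhang showed noetherian AS-regular algebras have finite GK dimension, which is the route some authors take).
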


On the other hand, (skew) Calabi-Yau algebras need not be graded. For example, the enveloping algebra of any finite-dimensional Lie algebra $\mathfrak{g}$ is Calabi-Yau if $\tr(\ad_\mathfrak{g}(x))$ for all $x \in \mathfrak{g}$ (\cite{he}, Lemma 4.1). PBW deformations of Calabi-Yau algebras were studied by Berger and Taillefer \cite{berger}. Their main result is that PBW deformations of Calabi-Yau algebras defined by quivers and potentials are again Calabi-Yau. More recently, Wu and Zhu proved that a PBW deformation of a noetherian Koszul Calabi-Yau algebra is Calabi-Yau if and only if its Rees ring is \cite{wu}. We generalize their result partially below.

\begin{thm}\label{cyprop}If $A$ is essentially regular, then $A$ is skew Calabi-Yau.\end{thm}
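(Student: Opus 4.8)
By Proposition \ref{grprop}, $A$ being essentially regular --- say of dimension $d$ --- is equivalent to $\gr(A)$ being regular of dimension $d$, with $A$ a PBW deformation of $\gr(A)$. Since $\gr(A)$ is connected graded, Theorem \ref{rrz} applies: $\gr(A)$ is skew Calabi-Yau of dimension $d$, so it is homologically smooth and carries a graded Nakayama automorphism $\mu_0\in\Aut(\gr(A))$ with $\Ext^i_{\gr(A)^e}(\gr(A),\gr(A)^e)\iso\delta_{i,d}\,\gr(A)^{\mu_0}$. My plan is to transfer this structure through the standard filtration, proving in effect a filtered version of Theorem \ref{rrz}: if $\gr(A)$ is connected graded and skew Calabi-Yau of dimension $d$, then so is $A$. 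The conclusion will be that $A$ is skew Calabi-Yau of dimension $d$.

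First I would lift a resolution. As $\gr(A)^e=\gr(A)\tensor\gr(A)^{op}$ is connected graded, the finite projective $\gr(A)^e$-resolution of $\gr(A)$ can be chosen to consist of finitely generated free modules, say $F_\bullet\to\gr(A)$. Filtering $A^e=A\tensor A^{op}$ by the tensor of the standard filtrations gives $\gr(A^e)=\gr(A)^e$, and a standard filtered lifting produces a complex $\widetilde F_\bullet\to A$ of finitely generated free $A^e$-modules with $\gr(\widetilde F_\bullet)=F_\bullet$; since all the filtrations involved are exhaustive and bounded below, exactness of $\gr(\widetilde F_\bullet)\to\gr(A)$ forces exactness of $\widetilde F_\bullet\to A$. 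Hence $A$ is homologically smooth of Hochschild dimension at most $d$.

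Next I would compute $\Ext^\bullet_{A^e}(A,A^e)$ from $\widetilde F_\bullet$. For finitely generated free filtered modules one has $\gr\,\mathrm{Hom}_{A^e}(\widetilde F_i,A^e)\iso\mathrm{Hom}_{\gr(A)^e}(F_i,\gr(A)^e)$, so the $E_1$-page of the spectral sequence of the filtered complex $\mathrm{Hom}_{A^e}(\widetilde F_\bullet,A^e)$ is $\Ext^\bullet_{\gr(A)^e}(\gr(A),\gr(A)^e)=\delta_{\bullet,d}\,\gr(A)^{\mu_0}$. Being concentrated in a single cohomological degree, the spectral sequence degenerates, giving $\Ext^i_{A^e}(A,A^e)=0$ for $i\neq d$ and $\gr\,\Ext^d_{A^e}(A,A^e)\iso\gr(A)^{\mu_0}$ as filtered $A^e$-modules. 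Viewed as a right $\gr(A)$-module, $\gr(A)^{\mu_0}$ is free of rank one, so the filtered right $A$-module $N:=\Ext^d_{A^e}(A,A^e)$ has free associated graded of rank one; lifting a homogeneous generator and using boundedness of the filtration shows $N$ is free of rank one over $A$. Fixing a generator $\eta$, the formula $a\eta=\eta\,\tau(a)$ defines an algebra endomorphism $\tau$ of $A$ which is bijective because its associated graded is the (bijective) Nakayama automorphism of $\gr(A)$; thus $N\iso A^\tau$, hence $\Ext^i_{A^e}(A,A^e)\iso\delta_{i,d}\,A^\tau$, and $A$ is skew Calabi-Yau of dimension $d$.

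The delicate step is the filtered-to-graded comparison in the third paragraph --- identifying the associated graded of the ``dualizing complex'' $\mathrm{RHom}_{A^e}(A,A^e)$ with that of $\gr(A)$ --- which requires careful handling of the spectral sequence of a filtered complex, the key point being that concentration in degree $d$ forces degeneration; the remaining steps (the filtered lifting and ``free associated graded implies free'') are routine. An alternative, closer to the Rees-ring result of Wu--Zhu, is to apply Theorem \ref{rrz} to $H=H(A)$, observe that the skew Calabi-Yau property descends to the central localization $H[x_0\inv]$, use Lemma \ref{loclem} to identify $A[x_0^{\pm1}]$ as skew Calabi-Yau of dimension $d+1$, and then descend along the Laurent extension $A\hookrightarrow A[x_0^{\pm1}]$ via a K\"unneth formula for $\mathrm{RHom}$ over enveloping algebras; there the subtlety is showing that the Nakayama automorphism of $A[x_0^{\pm1}]$ restricts to $A$.
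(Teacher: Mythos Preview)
Your proof is correct but takes a genuinely different, more hands-on route than the paper's. The paper's argument is essentially two citations: it invokes Yekutieli--Zhang (existence of a rigid dualizing complex $A^\sigma[n]$ for a filtered algebra whose associated graded is connected graded Gorenstein) to obtain the rigid Gorenstein condition directly, and then quotes McConnell--Robson to transfer finite projective dimension from $\gr(A)$ to $A$ for homological smoothness. You instead unpack the machinery: lift a finite free $\gr(A)^e$-resolution of $\gr(A)$ to a filtered free $A^e$-resolution of $A$, dualize, and run the spectral sequence of the filtered complex, using concentration in degree $d$ to force degeneration and then a filtered-free-of-rank-one argument to extract the Nakayama automorphism. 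This is precisely what underlies the Yekutieli--Zhang black box in this situation, so you are re-deriving it rather than citing it. The payoff of your approach is that it is self-contained and makes the relation $\gr(\tau)=\mu_0$ between the two Nakayama automorphisms explicit; the paper's version is shorter but opaque. Your alternative sketch via $H(A)[x_0^{-1}]\iso A[x_0^{\pm1}]$ is also viable and closer in spirit to Wu--Zhu; the descent of the Nakayama automorphism along $A\hookrightarrow A[x_0^{\pm1}]$ does go through (one may take the automorphism to fix the central unit $x_0$), but as you say it deserves a line of justification.
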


\begin{proof}By Proposition \ref{grprop}, $A$ is a PBW deformation of the regular algebra $\gr(A)$. By \cite{yekzha}, $A$ has a rigid dualizing complex $R=A^\sigma[n]$ for some integer $n$ and some $\sigma \in \Aut(A)$. This is precisely the condition for $A$ to be rigid Gorenstein. Since $\gr(A)$ is regular (Proposition \ref{grprop}), then $\gr(A)$ is Calabi-Yau by Theorem \ref{rrz}. Thus, $\gr(A)$ is homologically smooth and so, by \cite{mcrob}, Theorem 7.6.17, $A$ is homologically smooth.\end{proof}

\begin{cor}If $A$ is a PBW deformation of a graded skew Calabi-Yau algebra. Then $A$ is skew Calabi-Yau.\end{cor}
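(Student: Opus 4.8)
The plan is to reprise the proof of Theorem~\ref{cyprop} essentially verbatim, noting that that proof used the regularity of $\gr(A)$ only through two of its consequences, both of which ``graded skew Calabi-Yau'' also supplies: homological smoothness, and the existence of a rigid dualizing complex of the form $A^{\sigma}[n]$. Write $B := \gr(A)$. Since $A$ is a PBW deformation of a graded skew Calabi-Yau algebra, the canonical surjection from $\gr(A)$ onto that algebra is an isomorphism, so $B$ is itself connected graded skew Calabi-Yau, of some dimension $d$.

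For homological smoothness: a skew Calabi-Yau algebra is homologically smooth by definition, so $B$ is homologically smooth; then, exactly as in the last step of the proof of Theorem~\ref{cyprop}, the transfer of homological smoothness from the associated graded ring to the filtered ring (\cite{mcrob}, Theorem~7.6.17) shows that $A$ is homologically smooth, and hence of finite global dimension. For the rigid Gorenstein condition: $B$, being connected graded skew Calabi-Yau, has a rigid dualizing complex $B^{\mu}[d]$, where $\mu$ is its Nakayama automorphism. As in the proof of Theorem~\ref{cyprop}, the Yekutieli--Zhang transfer of rigid dualizing complexes to filtered algebras (\cite{yekzha}) then produces a rigid dualizing complex $A^{\sigma}[n]$ for $A$, for some integer $n$ and some $\sigma \in \Aut(A)$; this is precisely the rigid Gorenstein condition for $A$. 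Combining the two, $A$ is homologically smooth and rigid Gorenstein, i.e.\ skew Calabi-Yau.

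I expect the only point needing care is confirming that the two transfer results cited above apply under exactly the hypotheses at hand, without tacitly invoking regularity or a finiteness condition (such as noetherianity) beyond what ``graded skew Calabi-Yau'' provides — but since these are the same inputs already used to prove Theorem~\ref{cyprop}, and ``graded skew Calabi-Yau'' yields the very consequences of regularity that were relied upon there, no genuinely new ingredient is required. Alternatively, one can bypass the direct argument entirely: by the skew Calabi-Yau analogue of Theorem~\ref{rrz} (cf.\ \cite{rrz}), a connected graded algebra is skew Calabi-Yau if and only if it is regular, so $B = \gr(A)$ is in fact regular, whence $A$ is essentially regular by Proposition~\ref{grprop} and Theorem~\ref{cyprop} applies immediately.
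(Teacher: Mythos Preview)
Your proposal is correct, and in fact your final ``alternative'' paragraph is exactly the paper's own proof: the paper simply invokes Theorem~\ref{rrz} to conclude that $\gr(A)$, being connected graded skew Calabi-Yau, is regular, so $A$ is essentially regular and Theorem~\ref{cyprop} applies directly. Your main argument --- reprising the two transfer steps (homological smoothness via \cite{mcrob} and the rigid dualizing complex via \cite{yekzha}) without passing through regularity --- is a valid and somewhat more self-contained route, but it buys nothing extra here since the rrz equivalence makes the detour through regularity cost-free; the paper opts for the one-line reduction rather than re-running the internals of Theorem~\ref{cyprop}.
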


\begin{proof}By hypothesis, $\gr(A)$ is graded skew Calabi-Yau. Hence, $\gr(A)$ is regular by Theorem \ref{rrz} and so $A$ is essentially regular. Hence, $A$ is skew Calabi-Yau by Theorem \ref{cyprop}.\end{proof}

Let $R$ be a ring, $\sigma \in Aut(R)$ and $\delta$ a $\sigma$-derivation, that is, $\delta:R \rightarrow R$ satisfies the twisted Leibniz rule, $\delta(rs) = \sigma(r)\delta(s) + \delta(r)s$ for all $r,s \in R$. The \textit{skew polynomial ring} (or Ore extension) $R[\xi;\sigma,\delta]$ is defined via the commutation rule $\xi r = \alpha(r)\xi + \delta(r)$ for all $r \in R$. 

Suppose $A$ is of the form \eqref{form}. If $\sigma \in \Aut(A)$ with $\deg(\sigma(x_i)) = 1$, then $\sigma$ lifts to an automorphism $\hat{\sigma} \in \Aut(H(A))$ defined by $\hat{\sigma}(x_0)=x_0$ and $\hat{\sigma}(x_i)=\widehat{\sigma(x_i)}$ for $i > 0$. To see this, let $g$ be a defining relation for $A$ and $\hat{g}$ the corresponding relation in $H(A)$. For a generator $x_i$ of $A$, $\sigma(x_i)=y_{i,1} + y_{0,1}$ for some $y_{i,1} \in A_1$ and $y_{0,1} \in A_0=\K$. Thus, $\hat{\sigma}(x_i)=y_{i,1}+y_{0,1}x_0$. We must show that this rule implies $\hat{\sigma}(g) = \widehat{\sigma(g)}$. Suppose $\deg(g)=d$ and write $g = \sum_{i=0}^d g_i$ with $\deg(g_i)=i$. Then $\sigma(g_i) = \sum_{j=0}^i g_{i,j}$ where $\deg(g_{i,j})=j$. Thus,
	\[ \widehat{\sigma(g)} = \widehat{\sum_{i=0}^d \sigma(g_i)} = \widehat{\sum_{i=0}^d \sum_{j=0}^i g_{i,j}} = \sum_{i=0}^d \sum_{j=0}^i g_{i,j} x_0^{d-j}.\]
Now $\hat{g} = \sum_{i=0}^d g_i x_0^{d-i}$ and a similar computation shows
	\[ \hat{\sigma}(\hat{g}) = \sum_{i=0}^d \hat{\sigma}(g_i) x_0^{d-i} = \sum_{i=0}^d \sum_{j=0}^i (g_{i,j} x_0^{i-j}) x_0^{d-i} = \sum_{i=0}^d \sum_{j=0}^i g_{i,j} x_0^{d-j}.\]

Similarly, if $\delta$ is a $\sigma$-derivation of $A$ with $\delta(x_i)\leq 2$, then $\hat{\delta}$ is $\hat{\sigma}$-derivation of $H(A)$ with $\hat{\delta}(x_0)=0$ and $\hat{\delta}(x_i)=\widehat{\delta(x_i)}$ for $i>0$.

\begin{lem}\label{hsp}Let $A$, $\sigma$, and $\delta$ be as above. Then $H(A[\xi;\sigma,\delta]) = H(A)[\xi;\hat{\sigma},\hat{\delta}]$.\end{lem}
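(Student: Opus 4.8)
The plan is to verify the claimed equality of algebras by presenting both sides via generators and relations and checking they agree. Write $B = A[\xi;\sigma,\delta]$, which is the $\K$-algebra on generators $x_1,\dots,x_n,\xi$ subject to the defining relations $f_1,\dots,f_m$ of $A$ together with the commutation relations $\xi x_i - \sigma(x_i)\xi - \delta(x_i)$ for each $i$. First I would compute $H(B)$ directly from this presentation: its generators are $x_0,x_1,\dots,x_n,\xi$, and its relations are the homogenized relations $\hat f_1,\dots,\hat f_m$ (these are exactly the homogenized relations defining $H(A)$), the homogenizations $\widehat{x_0x_i - x_ix_0}$ (which just give $x_0x_i - x_ix_0$ since those are already homogeneous), a relation $x_0\xi - \xi x_0$ coming from treating $\xi$ as a degree-one generator to be homogenized against $x_0$, and the homogenizations $\widehat{\xi x_i - \sigma(x_i)\xi - \delta(x_i)}$.

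The crux is to identify that last family of relations with the defining relations of the Ore extension $H(A)[\xi;\hat\sigma,\hat\delta]$, namely $\xi x_i - \hat\sigma(x_i)\xi - \hat\delta(x_i)$. Since $\deg \sigma(x_i) = 1$ and $\deg\delta(x_i)\le 2$, the element $\xi x_i - \sigma(x_i)\xi - \delta(x_i)$ in the free algebra has a degree-2 leading part $\xi x_i - \sigma(x_i)\xi$ and a lower-order part $-\delta(x_i)$, so its homogenization is $\xi x_i - \sigma(x_i)\xi - \widehat{\delta(x_i)}\,x_0^{2-\deg\delta(x_i)}$ adjusted appropriately term by term — but by the definition of $\hat\sigma$ on generators ($\hat\sigma(x_i) = \widehat{\sigma(x_i)}$, which has degree one so needs no $x_0$-padding) and the definition of $\hat\delta$ ($\hat\delta(x_i) = \widehat{\delta(x_i)}$), this is precisely $\xi x_i - \hat\sigma(x_i)\xi - \hat\delta(x_i)$. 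Here I would lean on the fact, already established in the paragraph preceding the lemma, that homogenization is compatible with $\sigma$ and with $\delta$ in the sense that $\widehat{\sigma(g)} = \hat\sigma(\hat g)$ and analogously for $\delta$; applied to $g = \xi x_i$ and the linear/quadratic expressions $\sigma(x_i),\delta(x_i)$, this gives the term-by-term matching of $x_0$-exponents without a separate computation.

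It then remains to compare the two presentations as a whole. On one side, $H(A)[\xi;\hat\sigma,\hat\delta]$ has, as an algebra, generators $x_0,\dots,x_n,\xi$ with relations: the relations of $H(A)$ (i.e. $\hat f_i$ and $x_0x_i - x_ix_0$), plus $\xi r - \hat\sigma(r)\xi - \hat\delta(r)$ for $r$ ranging over a generating set of $H(A)$, which it suffices to impose for $r \in \{x_0, x_1,\dots,x_n\}$. For $r = x_0$ this reads $\xi x_0 - \hat\sigma(x_0)\xi - \hat\delta(x_0) = \xi x_0 - x_0\xi$ since $\hat\sigma(x_0) = x_0$ and $\hat\delta(x_0) = 0$; for $r = x_i$ it reads $\xi x_i - \hat\sigma(x_i)\xi - \hat\delta(x_i)$. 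Matching this list term for term against the list I extracted for $H(B)$ gives the isomorphism, with generators identified in the obvious way.

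I expect the main obstacle to be purely bookkeeping: being careful that when $\xi$ is homogenized as a new degree-one generator against $x_0$ one really does get exactly the commutation relation $x_0\xi = \xi x_0$ and no extra $x_0$ powers, and that the $x_0$-padding in $\widehat{\xi x_i - \sigma(x_i)\xi - \delta(x_i)}$ lands on the lower-degree terms in precisely the way that reproduces $\hat\delta(x_i)$ (with its own internal $x_0$-padding) rather than something shifted by an extra factor of $x_0$. Once the degree conventions $\deg\sigma(x_i)=1$, $\deg\delta(x_i)\le 2$, $\deg\xi = 1$ are tracked consistently, both presentations visibly coincide and the identification of generators is forced. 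No genuinely hard step is anticipated; the lemma is a compatibility statement whose content is the correct alignment of these homogenization conventions.
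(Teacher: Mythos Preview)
Your proposal is correct and follows essentially the same approach as the paper: both arguments compare the two algebras by writing down generators and relations and checking that the homogenized Ore relations $\widehat{\xi x_i - \sigma(x_i)\xi - \delta(x_i)}$ coincide with $\xi x_i - \hat\sigma(x_i)\xi - \hat\delta(x_i)$. Your write-up is in fact more careful than the paper's (which is terse and contains a couple of typographical slips), particularly in tracking the extra relation $\xi x_0 = x_0\xi$ and the degree bookkeeping, but the underlying idea is identical.
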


\begin{proof}Let $f_1,\hdots,f_m$ be the defining relations for $A$. The defining relations for $A[\xi;\sigma,\delta]$ are then $f_1,\hdots,f_m$ along with $e_1,\hdots,e_n$ where $e_i=x_i\xi-\sigma(\xi) x_i - g_i$. The defining relations for $H(A[\xi;\sigma,\delta])$ are then $\hat{f}_i,\hdots,\hat{f}_m$ along with $\hat{e}_i = x_i\xi-\hat{\sigma}(\xi) x_i - \hat{g}_i$. By Lemma \ref{hsp}, these are precisely the defining relations for $H(A)[\xi;\hat{\sigma},\hat{\delta}]$.\end{proof}

\begin{prop}Let $A$ be essentially regular. If $\sigma$ and $\delta$ are as above, then $A[\xi;\sigma,\delta]$ is essentially regular.\end{prop}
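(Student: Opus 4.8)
The plan is to reduce to a statement about graded Ore extensions by way of Lemma~\ref{hsp}. Since $A$ is essentially regular, $H := H(A)$ is regular, and by Lemma~\ref{hsp} we have $H(A[\xi;\sigma,\delta]) = H[\xi;\hat{\sigma},\hat{\delta}]$; thus it suffices to show this Ore extension is regular. As established in the paragraphs before Lemma~\ref{hsp}, $\hat{\sigma}$ is a graded automorphism of $H$, while $\hat{\delta}$ is a graded $\hat{\sigma}$-derivation carrying $H_k$ into $H_{k+1}$ and killing $x_0$. Hence, assigning $\xi$ degree one, $S := H[\xi;\hat{\sigma},\hat{\delta}]$ is connected graded and generated in degree one. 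The claim therefore follows once we know that a connected graded Ore extension $S = R[\xi;\tau,\partial]$ of an AS-regular algebra $R$, with $\tau$ a graded automorphism and $\partial$ a graded degree-one $\tau$-derivation, is AS-regular of global dimension $\gld R + 1$.

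Two of the three defining conditions are routine. As a left $R$-module $S$ is free on the powers of $\xi$, so $H_S(t) = H_R(t)/(1-t)$ irrespective of $\tau$ and $\partial$; since $R$ is regular its Hilbert series is rational, so $S$ has finite GK dimension with $\gk S = \gk R + 1$. For the homological dimension, the standard Ore-extension estimate (see \cite{mcrob}, Theorem~7.5.3) gives $\gld S = \gld R + 1$, which is finite.

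The substantive point, and the step I expect to be the main obstacle, is that $S$ is AS-Gorenstein. The natural approach is to filter $S$ by powers of $\xi$: the associated graded algebra is the twisted polynomial ring $R[\xi;\tau]$, in which $\xi$ is a normal non-zero-divisor with $R[\xi;\tau]/\xi R[\xi;\tau] \cong R$, so a twisted form of the Rees lemma (cf.\ \cite{rotman}, Theorem~8.34) yields $\Ext^{i+1}_{R[\xi;\tau]}(\K, R[\xi;\tau]) \cong \Ext^{i}_{R}(\K, R)$ up to a degree shift and automorphism twist; thus $R[\xi;\tau]$ is AS-Gorenstein of dimension $\gld R + 1$. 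One then transfers the AS-Gorenstein property from $\gr S$ back to $S$ via the usual filtered-to-associated-graded comparison of $\Ext$ groups. Alternatively, one can bypass the derivation entirely by invoking Theorem~\ref{rrz}: $R$ is connected graded Calabi-Yau, hence skew Calabi-Yau; Ore extensions of skew Calabi-Yau algebras are again skew Calabi-Yau; and a connected graded skew Calabi-Yau algebra is AS-regular. Either way $S = H[\xi;\hat{\sigma},\hat{\delta}] = H(A[\xi;\sigma,\delta])$ is regular, of dimension one greater than that of $H$, so $A[\xi;\sigma,\delta]$ is essentially regular. In either approach, the care lies in pushing the AS-Gorenstein condition past the derivation $\partial$.
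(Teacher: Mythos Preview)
Your proposal is correct, and in fact your ``alternative'' route is exactly the argument the paper gives: reduce via Lemma~\ref{hsp} to showing $H[\xi;\hat\sigma,\hat\delta]$ is regular, observe that $H$ is (skew) Calabi--Yau by Theorem~\ref{rrz}, invoke the fact that Ore extensions of (skew) Calabi--Yau algebras are again (skew) Calabi--Yau (the paper cites \cite{lww}, Theorem~3.3), and then apply Theorem~\ref{rrz} once more to conclude regularity since the extension is connected graded. The paper does not carry out your first, hands-on approach (direct verification of finite GK dimension, global dimension, and AS-Gorenstein via filtering by $\xi$-degree and a Rees-lemma argument); that route is valid in principle but, as you note, the transfer of the Gorenstein condition through the derivation takes real work, which is precisely what the Calabi--Yau machinery packages for free.
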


\begin{proof}Let $R = H(A)[\xi;\hat{\sigma},\hat{\delta}]$. By Lemma \ref{hsp}, it suffices to prove that $R$ is regular. Since $H(A)$ is regular, then it is Calabi-Yau. By \cite{lww}, Theorem 3.3, skew polynomial extensions of Calabi-Yau algebras are Calabi-Yau and so $R$ is Calabi-Yau. Moreover, $\hat{\sigma}$ and $\hat{\delta}$ preserve the grading on $H(A)$ and so $R$ is graded. Thus, by Theorem \ref{rrz}, $R$ is regular.\end{proof}

If $\delta=0$ (so $\hat{\delta}=0$), then $\xi$ is a normal element in $H(A)[\xi;\hat{\sigma}]$. Thus, by the Rees Lemma and \cite{mcrob}, Proposition 7.2.2, it follows that $A[\xi;\sigma]$ essentially regular implies $A$ is essentially regular. It is not clear if this holds in the case $\delta \neq 0$.

\section{Geometry of Homogenized Algebras}\label{geom}

In \cite{atv}, Artin, Tate, and Van den Bergh showed that every dimension two and dimension three regular algebra surjects onto a \textit{twisted homogeneous coordinate ring}. We begin this section by defining a twisted homogeneous coordinate ring following the exposition in \cite{keeler}. We then go on to define the related concept of a \textit{geometric algebra}, which was originally called an \textit{algebra defined by geometric data} by Vancliff and Van Rompay \cite{vanvan}. 

While one would not expect such a construction for deformations of regular algebras, one might hope to recover information about the deformed algebra from the geometry associated to the homogenization of a deformation. We show that, in certain cases, this geometry allows us to classify all finite-dimensional simple modules of a deformed regular algebra.

Let $E$ be a projective scheme and $\sigma \in \Aut(E)$. Set $\L_0=\O_E$, $\mathcal{L}_1 = \mathcal{L}$, and \[\L_d = \L \tensor_{\O_E} \L^\sigma \tensor_{\O_E} \cdots \tensor_{\O_E} \L^{\sigma^{d-1}} \text{ for } d \geq 2.\] Define the (graded) vector spaces $\B_m = H^0(E,\mathcal{L}_m)$. Taking global sections of the natural isomorphism $\L_d \tensor_{\O_E} \mathcal{L}_e^{\sigma^d} \iso \L_{d+e}$ gives a multiplication defined by $a \cdot b = a\sigma^m(b) \in \B_{m+n}$ for $a \in \B_m$, $b \in \B_n$. Thus, if $\sigma = \id_E$, then this construction defines the the (commutative) homogeneous coordinate ring of $E$.

\begin{defn}The \textbf{twisted homogeneous coordinate ring} of $E$ with respect to $\mathcal{L}$ and $\sigma$ is the $\mathbb{N}$-graded ring $\B=\B(E,\L,\sigma):=\bigoplus_{d \geq 0} H^0(E,\mathcal{L}_d)$ with multiplication defined as above.\end{defn}

Artin and Stafford have shown that every domain of GK dimension two is isomorphic to a twisted homogeneous coordinate ring \cite{artstaf}. Hence, if $H$ is 3-dimensional regular and $g \in H_3$ is a normal element which is not a zero-divisor, then $H/gH$ must be isomorphic to some $\B$. By (\cite{atv}, Theorem 6.8), every 3-dimensional regular algebra contains such an element, though it may be zero.

To define a geometric algebra, we make a slight change of notation to conform to convention. In addition, we specialize to the case of quadratic algebras. These algebras were originally defined by Vancliff and Van Rompay. They were renamed \textit{geometric algebras} by Mori \cite{mori} and we use that definition here.

The free algebra $\K\langle x_0,x_1,\hdots,x_n\rangle$ is equivalent to the tensor algebra $T(V)$ on the vector space $V=\{x_0,\hdots,x_1\}$. If $H$ is quadratic and homogeneous, then we write $H=T(V)/(R)$ where $R$ is the set of defining polynomials of $H$. Any defining polynomial may be regarded as a bilinear form $f:V \tensor_k V \rightarrow \K$. Write $f = \sum \alpha_{ij} x_i \tensor x_j$, $\alpha_{ij} \in \K$. If $p,q \in \mathbb{P}(V^*)$, written as $p=(a_0:a_1:\cdots:a_n)$ and $q=(b_0:b_1:\cdots:b_n)$, then $f(p,q) = \sum \alpha_{ij} a_ib_j$. Define the \textit{vanishing set} of $R$ to be \[ \van(R) = \{ (p,q) \in \mathbb{P}(V^*) \times \mathbb{P}(V^*) \mid f(p,q)=0 \text{ for all } f \in R\}.\]
	
\begin{defn}A homogeneous quadratic algebra $H=T(V)/(R)$ is called \textbf{geometric} if there exists a scheme $E \subset \mathbb{P}(V^*)$ and $\sigma \in \Aut E$ such that
\begin{align*}
  \textbf{G1} ~ &\van(R) = \{(p,\sigma(p)) \in \mathbb{P}(V^*) \times \mathbb{P}(V^*) \mid p \in E\}, \\
  \textbf{G2} ~ &R = \{f \in V \tensor_k V \mid f(p,\sigma(p)) = 0 \text{ for all } p \in E\}.
\end{align*}
The pair $(E,\sigma)$ is called the \textbf{geometric data} corresponding to $H$.\end{defn}

All regular algebras of dimension $d \leq 3$ are geometric. The classification of quadratic regular algebras given in \cite{atv} shows that either $E=\mathbb{P}^2$, or else $E$ is a cubic divisor in $\mathbb{P}^2$. The projective scheme $E$ is referred to as the point scheme of $H$. It is not true that every 4-dimensional regular algebra is geometric \cite{rompay}. However, it seems that they are in the generic case.

\begin{prop}\label{geo}If $A$ is a PBW deformation of a quadratic geometric algebra, then $H(A)$ is geometric.\end{prop}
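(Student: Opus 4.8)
The plan is to show that the geometric data of the quadratic algebra $B=\gr(A)$ lifts to geometric data for $H=H(A)$, using the explicit relationship between the relations of $B$ and the relations of $H$. Write $B=T(W)/(R_B)$ where $W=\Span\{x_1,\dots,x_n\}$, and let $(E_B,\sigma_B)$ be the geometric data for $B$ with $E_B\subset\mathbb{P}(W^*)$. Since $A$ is a PBW deformation of $B$, the defining relations of $A$ are of the form $f=f_2+f_1+f_0$ with $f_2\in R_B$ (the leading quadratic part), and the homogenized relation is $\hat f=f_2+f_1x_0+f_0x_0^2\in V\tensor V$, where $V=\Span\{x_0,x_1,\dots,x_n\}$. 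The relations of $H$ are exactly these $\hat f$ together with the commutation relations $x_0x_i-x_ix_0$ for $i=1,\dots,n$. So $H=T(V)/(R_H)$ with $R_H=\{\hat f_1,\dots,\hat f_m\}\cup\{x_0x_i-x_ix_0:1\le i\le n\}$.

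First I would analyze $\van(R_H)\subset\mathbb{P}(V^*)\times\mathbb{P}(V^*)$. Write points as $p=(a_0:a_1:\cdots:a_n)$, $q=(b_0:b_1:\cdots:b_n)$. The commutation relations $x_0x_i-x_ix_0$ force $a_0b_i=a_ib_0$ for all $i$, which (away from degenerate loci) says $p$ and $q$ have the same "affine part"; more precisely, setting $\bar p=(a_1:\cdots:a_n)$ etc., one gets that either $a_0=b_0=0$ and $\bar p,\bar q$ arbitrary in $\mathbb{P}(W^*)$, or $a_0,b_0\ne 0$ and $\bar p=\bar q$ after normalizing $a_0=b_0=1$. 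In the first stratum the quadratic parts $f_2$ of the $\hat f$ reduce to the relations $R_B$, so the $a_0=0$ slice of $\van(R_H)$ is $\van(R_B)=\{(p',\sigma_B(p')):p'\in E_B\}$. Then I would define $E\subset\mathbb{P}(V^*)$ to be the closure of the locus of points $(1:\bar p)$ together with the appropriate points at infinity, and define $\sigma\in\Aut(E)$ extending $\sigma_B$ (fixing the $x_0$-coordinate, since $\widehat{\sigma(x_i)}$ by construction has $x_0$-exponent making the homogenization consistent — one uses here that $\sigma_B$ is the automorphism in $B$'s geometric data, whose graph is cut out by $R_B$). Verifying G1 is then a matter of checking that the nonzero-$a_0$ part of $\van(R_H)$ is precisely the graph of this $\sigma$: given $\hat f(p,\sigma(p))$ with $a_0=1$, it equals $f_2+f_1+f_0$ evaluated at the corresponding affine point, which vanishes iff the original (inhomogeneous) relation vanishes there.

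For G2 I would argue dimension-theoretically: $R_H$ has dimension $m+n$ inside $V\tensor V$ (which has dimension $(n+1)^2$), and I must show every bilinear form vanishing on the graph of $\sigma$ over $E$ lies in $R_H$. The commutation relations account for the forms vanishing because $p,\sigma(p)$ share affine coordinates; modulo those, one is reduced to forms on $W\tensor W$ (plus cross terms with $x_0$) vanishing on $E_B$'s graph, which by G2 for $B$ gives exactly $R_B$, and tracking the $x_0$-degrees recovers the $f_1x_0$ and $f_0x_0^2$ pieces. The main obstacle I anticipate is handling the points at infinity ($a_0=0$) carefully: I need $E$ to be an honest subscheme with $\sigma$ a genuine automorphism, which requires checking that $E_B$ (the $a_0=0$ part) glues correctly to the affine part and that $\sigma_B$ extends — this is where one must use that $H$ is already known to be regular (Proposition \ref{grprop} and Corollary \ref{essAS}), hence by the $\le 3$-dimensional classification, or by Proposition \ref{dim1}'s hypotheses, the point scheme behaves well; alternatively one invokes that a regular algebra's relations have the expected codimension so the scheme-theoretic $\van(R_H)$ cannot be too large. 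I would close by remarking that when $H$ has dimension $\le 3$ this is automatic since all such regular algebras are geometric, so the content is really in the quadratic higher-dimensional case, where the above lifting argument is the substance.
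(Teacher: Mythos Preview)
Your decomposition of $\van(R_H)$ into the $a_0=0$ stratum (reducing to $\van(R_B)$, the graph of $\sigma_B$ on $E_B$) and the $a_0\neq 0$ stratum (forcing $\bar p=\bar q$ via the commutation relations) is exactly the paper's argument for \textbf{G1}, and your definition $E=E_0\cup E_1$ with $\sigma|_{E_0}=\sigma_B$, $\sigma|_{E_1}=\id$ matches the paper. (The parenthetical about $\widehat{\sigma(x_i)}$ conflates the point-scheme automorphism with lifted algebra automorphisms of $H$; this is a cosmetic confusion---the only content needed is $\sigma|_{E_1}=\id$.)

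The genuine gap is in \textbf{G2}. Restricting a form $g\in V\otimes V$ to $E_0$ and invoking G2 for $B$ does pin down the $W\otimes W$-component of $g$ as lying in $R_B$. But modulo $R_B$ and the commutation relations, $g$ still carries arbitrary $x_0\otimes x_i$, $x_i\otimes x_0$, and $x_0\otimes x_0$ terms, and your phrase ``tracking the $x_0$-degrees recovers the $f_1x_0$ and $f_0x_0^2$ pieces'' supplies no mechanism forcing these to agree with the lower-order parts of some homogenized relation: the $E_0$ restriction annihilates all $x_0$-terms, so they are invisible there, and your dimension count gives no independent handle on $\dim\{g: g(p,\sigma(p))=0\ \forall p\in E\}$. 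The constraint must come from $E_1$. The paper obtains it via the Nullstellensatz: setting $x_0=1$ and passing to the symmetrization, the residual part of $g$ becomes a polynomial in $\K[x_1,\dots,x_n]$ vanishing on the commutative variety $E_1$, hence some power lies in the ideal generated by the abelianized relations; the quadratic degree bound coming from the $E_0$ analysis then forces that power to be $1$, so $g\in R_H$. This Nullstellensatz step is the missing idea in your sketch.

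Two smaller points. First, your proposed fallback---invoking regularity of $H$ to control the point scheme---is not available: the hypothesis is only that $\gr(A)$ is quadratic \emph{geometric}, not regular, so neither Proposition~\ref{grprop} nor the low-dimensional classification applies. The paper's argument uses no regularity. Second, your concern about gluing $E_0$ to the closure of the affine part is misplaced; the paper simply takes $E=E_0\cup E_1$ as a union of $\sigma$-invariant pieces without any scheme-theoretic subtlety.
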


\begin{proof}Let $V=\{x_1,\hdots,x_n\}$ and $W=\{x_0\} \cup V$. Write \[H(A)=T(W)/(R) \text{ and } \gr(A)=T(V)/(S).\] Choose $p,q \in \van(R)$ and write \[p=(a_0:a_1:\cdots:a_n), ~~q=(b_0:b_1:\cdots:b_n).\] Let $e_i=x_0x_i-x_ix_0$, $i=1,\hdots,n$, be the commutation relations of $x_0$ in $H$. Suppose $a_0=0$, then $e_i(p,q)=0$ implies $a_ib_0=0$ for $i=1,\hdots,n$. Since $q$ is not identically zero, then $b_0=0$. Reversing the argument, we see that $a_0=0$ if and only if $b_0=0$. Let $E_0 \subset \van(R)$ be those points with the first coordinate zero and define $\sigma{\mid_{E_0}}$ to be the automorphism corresponding to $\gr(A)$. Then $E_0$ is $\sigma$-invariant and the restriction $\left(E_0,\sigma{\mid_{E_0}}\right)$ is the geometric pair for $\gr(A)$.

Let $E_1 \subset E$ be those points with first coordinate nonzero. If $a_0 \neq 0$, then $b_0 \neq 0$ and so there is no loss in letting $a_0=b_0=1$. Hence, $e_i(p,q)=0$ implies $a_i=b_i$ and so we define $\sigma{\mid_{E_1}} = \id_{E_1}$. 

We define the scheme $E=E_0 \cup E_1 \subset \mathbb{P}(V^*)$ where $E_0$ corresponds to the point scheme of $\gr(A)$ and $E_1$ corresponds to the diagonal of $\van(R)$, that is, $p \in E_1$ if $(p,p) \in \van(R)$. Define the automorphism $\sigma$ where $\sigma{\mid_{E_0}}$ is the automorphism corresponding to $\gr(A)$ and $\sigma{\mid_{E_1}}=\id_{E_1}$. Thus, $E_1$ and $E_2$ are $\sigma$-invariant and $\van(R)$ is the graph of $E$. It is left to check that \textbf{G2} holds.

Let $F$ be $R$ reduced to $E_0$. If $f \in F$, then $f(p,p)=0$ for all $p \in E_1$, so $f$ corresponds to a relation in commutative affine space. Define $C=\K[x_1,\hdots,x_n]/(F)$. If $g \in V \tensor_k V$ such that $g(p,\sigma(p))=0$ for all $p\in E_1$, then $\hat{g}:=g{\mid_{E_1}} \in \van(F)$. By the Nullstellensatz, $\hat{g}^n \in F$ for some $n$. On the other hand, if $p \in E_0$, then either $g$ is a commutation relation or else $\gr(g) \in S$. Thus, $g$ is quadratic and so $n=1$. Hence, $g \in R$.\end{proof}

If $H$ is geometric, then $H_1 \approx \B_1$ and so there is a surjection $\mu:H \rightarrow \B$. When $H$ is noetherian, $I=\ker\mu$ is finitely generated by homogeneous elements and so there is hope of pulling information about $H$ back from $\B$. Let $M$ be a finite-dimensional simple module of $H$. Then $M$ is either $I$-torsion or it is $I$-torsionfree. Those of the first type may be regarded as modules over $H/I \iso \B$. Those of the second type are not as tractable, though results from \cite{atv2} give us a complete picture in the case that $H$ is regular of dimension 3. Our goal is to generalize the following example to homogenizations of 2-dimensional essentially regular algebras that are not PI.

\begin{ex}If $H=\hsjp$, then $E_1 = \{(1:a:\pm i)\}$. The finite-dimensional simple modules of $\sjp$ are exactly of the form $\sjp/((x_1-a)\sjp + (x_2 \pm i)\sjp)$. They are all non-isomorphic.\end{ex}

The following conjecture is closely related to these results. 

\begin{conj}[Walton]Let $S$ be a PBW deformation of a Sklyanin algebra that is not PI. Then all finite-dimensional simple modules of $S$ are 1-dimensional.\end{conj}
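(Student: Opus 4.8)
The plan is to reduce to the finite-dimensional simple modules of the homogenization $H=H(S)$ and then to exploit the geometric data attached to $H$ by Proposition~\ref{geo}. Write $B=\gr(S)$, a (three-dimensional) Sklyanin algebra whose point scheme is a smooth plane cubic $E_0 \subset \mathbb{P}^2$ with automorphism $\sigma_0$. By Proposition~\ref{grprop}, $S$ is essentially regular of dimension three, so $H$ is a four-dimensional regular algebra; it is noetherian and a domain, and by Proposition~\ref{hprops}(2) the hypothesis that $S$ is not PI says precisely that $H$ is not PI, equivalently that $\sigma_0$ has infinite order. Since $B$ is quadratic and geometric, Proposition~\ref{geo} produces geometric data $(E,\sigma)$ for $H$ with $E=E_0 \cup E_1$, where $E_0$ lies in the hyperplane $\{x_0=0\}$ and carries the infinite-order automorphism $\sigma_0$, while $E_1$ lies in the chart $\{x_0 \neq 0\}$ with $\sigma{\mid_{E_1}}=\id$.

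Next I would reduce $S$ to $H$. As $x_0$ is central, Schur's lemma makes it act by a scalar on any finite-dimensional simple module. Viewing a finite-dimensional simple $S$-module through the surjection $H \to S$ gives a finite-dimensional simple $H$-module on which $x_0$ acts by $1$; conversely, if a finite-dimensional simple $H$-module $M$ has $x_0$ acting by a nonzero scalar, then $M$ becomes a module over $H[x_0\inv] \iso S[x_0^{\pm 1}]$ (Lemma~\ref{loclem}) and restricts to a finite-dimensional simple $S$-module of the same dimension. So it suffices to show that every finite-dimensional simple $H$-module on which $x_0$ acts nontrivially is one-dimensional. Let $\mu\colon H \to \B=\B(E,\L,\sigma)$ be the surjection coming from geometricity and $I=\ker\mu$, a homogeneous ideal that is finitely generated because $H$ is noetherian.

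Let $M$ be a finite-dimensional simple $H$-module with $x_0$ acting by a nonzero scalar; then $IM=0$ or $IM=M$. The first case is the content of Proposition~\ref{Itors}: since $E_0$ and $E_1$ are disjoint, $\B$ splits as a ring into the twisted coordinate ring $\B(E_0,\L,\sigma_0)$ and the commutative coordinate ring $\B(E_1,\L,\id)$; the image of $x_0$ restricts to $0$ on $E_0$, so $M$ is forced to be a module over $\B(E_1,\L,\id)$, hence $M\iso\B(E_1,\L,\id)/\mathfrak{m}$ for a graded maximal ideal $\mathfrak{m}$ and $\dim_\K M=1$ (so $M$ corresponds to a point of $E_1$, as in Proposition~\ref{dim1}). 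Thus the conjecture is equivalent to the statement that $H$ has no $I$-torsionfree finite-dimensional simple module of dimension larger than one.

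This last step is the main obstacle. In the three-dimensional regular setting one has the classification of fat point modules from \cite{atv2}, which is precisely what makes the two-dimensional case (Proposition~\ref{fdim}) work; for the four-dimensional regular algebra $H$ no such classification is available, so an independent argument is needed. One line I would try is to degenerate an $I$-torsionfree finite-dimensional simple $H$-module along the central parameter $x_0 \in \Spec\K[x_0]$ and compare with the special fiber $B=H/x_0H$: when $\sigma_0$ has infinite order the only finite-dimensional simple $B$-module is the trivial one (and $B$ has no fat point modules of multiplicity $\geq 2$), by \cite{atv} and \cite{atv2}, and one would hope to propagate this bound to the generic fiber $S=H/(x_0-1)H$ by a flatness or semicontinuity argument for the relevant finite-dimensional module schemes of $H$. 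Making this rigorous — producing and controlling such a family, and ruling out that a higher-multiplicity simple module of $H$ degenerates to a non-simple (e.g.\ nilpotently extended) $B$-module — is exactly the difficulty, and the techniques that settle the analogous question in dimension three do not transfer directly. A possible alternative is to use a Clifford-type presentation of the deformed Sklyanin algebra over a commutative base and to identify a simple module of dimension $>1$ with a nontrivial class supported on a proper non-Azumaya locus, which the infinite-order hypothesis should obstruct; in either approach the substance of the argument is the finite-dimensional representation theory of the four-dimensional regular algebra $H$.
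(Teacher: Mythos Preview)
The statement you are trying to prove is labeled a \emph{conjecture} in the paper; the paper does not prove it. Immediately after stating it, the author writes: ``While we cannot answer this conjecture in its entirety, we make progress towards the affirmative by showing that the modules which are torsion over the canonical map $H(S) \rightarrow \B$ are $1$-dimensional.'' That partial progress is exactly Theorem~\ref{Itors} specialized to the deformed Sklyanin case (together with the remark following it about the eight fixed points from \cite{bruyn} and the fact that $\gr(S)$ has only the trivial finite-dimensional simple). Your reduction from $S$ to $H$ and your treatment of the $I$-torsion case reproduce this partial result; your argument via a splitting $\B \iso \B(E_0,\L,\sigma_0) \times \B(E_1,\L,\id)$ is a mild repackaging of the paper's argument via the largest graded prime $P\subset\Ann(M)$ and \cite{dmtwist}, Lemma 3.3, and it works here because $E_1$ is a finite set of points disjoint from the hyperplane $\{x_0=0\}$.

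Your own write-up is honest about the remaining gap: the $I$-torsionfree case. The paper does not handle it either; the analogue of \cite{atv2}, Theorem 7.3, which disposes of torsionfree simples in Proposition~\ref{fdim}, is a statement about three-dimensional regular algebras and has no known four-dimensional counterpart. The degeneration/semicontinuity idea you sketch is plausible heuristics, but as you note, nothing prevents a higher-dimensional simple $H$-module over the generic fiber from specializing to a non-simple module at $x_0=0$, so this does not yield a proof. In short: your proposal is not a proof of the conjecture, but neither is anything in the paper; what you have written correctly isolates the proved part (Theorem~\ref{Itors}) from the open part.
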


While we cannot answer this conjecture in its entirety, we make progress towards the affirmative by showing that the modules which are torsion over the canonical map $H(S) \rightarrow \B$ are 1-dimensional.

Let $A$ be essentially regular and $H=H(A)$ geometric with geometric pair $(E,\sigma)$. If $f$ is a defining relation of $H$ and $p=(p_0: p_1: \cdots : p_n) \in E_1$, then $f(p,\sigma(p))=0$ implies $\sigma(p)=p$. Thus, we write $f(p,p)=0$ or, more simply, $f(p)=0$. This is equivalent to defining the module,
	\[M_p = H/((x_0-1)H + (x_1-p_1)H + \cdots + (x_n-p_n)H ).\]
Since $H$ acts on $M_p$ via scalars, then $M$ is 1-dimensional. Since $x_0-1 \in \Ann(M_p)$, then $M_p$ may be identified with the $A$-module $A/((x_1-p_1)A + \cdots + (x_n-p_n)A )$. By an abuse notation, we also call this $A$-module $M_p$. Conversely, if $M=\{v\}$ is a 1-dimensional (simple) $A$-module, then $A$ acts on $M$ via scalars, say $x_i.v = c_iv$, $c_i \in \K$, $i=1,\hdots,n$. By setting $x_0.v=v$, $M$ becomes an $H$-module. This action must satisfy the defining relations of $H$ and so setting $p_i=c_i$ gives $f(p)=0$. We have now proved the following.

\begin{prop}\label{dim1}Let $A$ be essentially regular. The $A$-module $M$ is $1$-dimensional if and only if $M \iso M_p$ for some $p \in E_1$. Moreover, if $M_p$ and $M_q$ are 1-dimensional simple modules of $A$, then $M_p\iso M_q$ if and only if $p=q$.\end{prop}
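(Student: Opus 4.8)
The plan is to split the proposition into its two assertions and treat each by explicit bookkeeping relating a $1$-dimensional $A$-module to a point of $E_1$; most of this is already assembled in the paragraph preceding the statement, so I would organize the proof around making those remarks precise.

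For the first biconditional, suppose $M = \K v$ is a $1$-dimensional $A$-module, so the generators act by scalars $x_i . v = c_i v$; set $p = (1 : c_1 : \cdots : c_n)$. Since $E_1$ consists exactly of the points of $E$ with nonzero $x_0$-coordinate and, by \textbf{G1}--\textbf{G2}, $p \in E_1$ precisely when $(p,p) \in \van(R)$, it is enough to check that every defining relation of $H = H(A)$ vanishes at $(p,p)$. The commutation relations $x_0 x_i - x_i x_0$ vanish because the coordinates are scalars; for a homogenized relation $\hat f_j$, evaluating the associated bilinear form at $(p,p)$ with first coordinate $1$ simply recovers $f_j$ evaluated (commutatively) at $(c_1, \dots, c_n)$, which is $0$ because $f_j$ annihilates $v$. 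This gives $M \iso M_p$ with $p \in E_1$. Conversely, for $p \in E_1$ the vanishing $f_j(p) = 0$ yields an algebra character $\chi : A \to \K$, $x_i \mapsto p_i$; as $\chi$ kills the left ideal $\sum_i (x_i - p_i)A$ but not $1$, the module $M_p = A / \sum_i (x_i - p_i)A$ is nonzero, and since $x_i$ acts on its cyclic generator by $p_i$ it is spanned by that generator, hence exactly $1$-dimensional (and automatically simple). The identification $x_0 \mapsto 1$ lets one pass freely between the $A$- and $H$-module pictures of $M_p$.

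For the ``moreover'' part I would argue by intertwining. Fix $p, q \in E_1$ with $M_p \iso M_q$ as $A$-modules, let $v, w$ be cyclic generators, and let $\phi : M_p \to M_q$ be an isomorphism, so $\phi(v) = \lambda w$ for some $\lambda \in \K^\times$. Then for each $i$,
\[ \lambda p_i w = \phi(p_i v) = \phi(x_i . v) = x_i . \phi(v) = \lambda q_i w, \]
so $p_i = q_i$ for $i = 1, \dots, n$; since both points are normalized with $p_0 = q_0 = 1$, this is the equality $p = q$ in $\mathbb{P}(V^*)$. The converse is immediate, since $p = q$ makes $M_p$ and $M_q$ the same quotient of $A$.

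I do not expect a real obstacle here: the argument is essentially bookkeeping. The one point that needs genuine care is the translation between the $n{+}1$ homogeneous coordinates of a point of $E$ and the $n$ scalars by which $A$ acts --- justifying the normalization $p_0 = 1$, checking that under it the evaluation of the (bilinear) relations of $H$ at $(p,p)$ agrees with the evaluation of the original relations of $A$ at the acting scalars, and confirming via the character $\chi$ that $M_p$ does not collapse to $0$.
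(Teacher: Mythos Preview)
Your proposal is correct and follows essentially the same approach as the paper: the paper's argument is actually contained in the paragraph immediately preceding the proposition (concluding with ``We have now proved the following''), where a $1$-dimensional module is identified with scalar actions $x_i.v=c_i v$, lifted to an $H$-module via $x_0.v=v$, and matched with a point $p\in E_1$ via $f(p)=0$, and conversely $M_p$ is seen to be $1$-dimensional because $H$ acts by scalars. You supply slightly more detail than the paper does---in particular, you explicitly verify via the character $\chi$ that $M_p\neq 0$, and you spell out the intertwining computation for the ``moreover'' clause, which the paper leaves implicit---but the underlying argument is the same.
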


For $A$ essentially regular, we believe that certain conditions will imply that these are all of the finite-dimensional simple modules. In the following, we will show that this is the case when $A$ is $2$-dimensional essentially regular and not PI.

\begin{lem}\label{simps}If $A$ is essentially regular and $M$ is a finite-dimensional simple module of $A$, then $\Ann(M) \neq 0$.\end{lem}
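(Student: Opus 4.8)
The plan is to exploit the fact that an essentially regular algebra $A$ has finite GK dimension (since $\gr(A)$ is regular, hence of finite GK dimension, by Proposition \ref{grprop}). A finite-dimensional simple module $M$ is in particular a finitely generated $A$-module that is finite-dimensional over $\K$, so $\gk(M) = 0$. The annihilator $P = \Ann(M)$ is a primitive ideal of $A$, and $A/P$ acts faithfully on $M$; since $M$ is finite-dimensional over $\K$, $A/P$ embeds in $\operatorname{End}_\K(M) \cong M_k(\K)$ where $k = \dim_\K M$. Hence $A/P$ is a finite-dimensional $\K$-algebra, so $\gk(A/P) = 0$.

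The key step is then to argue that if $P = 0$, i.e. $A$ acts faithfully on a finite-dimensional module, then $A$ itself is finite-dimensional over $\K$ — which is absurd since $A$ surjects onto (or rather deforms) the regular algebra $\gr(A)$, and regular algebras of dimension $d+1 \geq 1$ are infinite-dimensional. Concretely: $\gk(A) = \gk(\gr(A)) = \gk(H(A)) - 1 = d \geq 1 > 0$ by the computation in the proof of Proposition \ref{grprop}, whereas $\gk(A/P) = 0$. If $P = 0$ then $A = A/P$ has GK dimension $0$, a contradiction. Therefore $P = \Ann(M) \neq 0$.

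I would write this up in two or three sentences: first recall $\gk(A) = d \geq 1$ from Proposition \ref{grprop}; then observe that $A/\Ann(M)$, acting faithfully on the finite-dimensional space $M$, is finite-dimensional over $\K$ and hence has GK dimension $0$; conclude that $\Ann(M)$ cannot be zero. One small point to be careful about is the edge case $d = 0$, but an essentially regular algebra of dimension $0$ would have $H(A)$ regular of dimension $1$, i.e. $H(A) \cong \K[x_0]$, forcing $A \cong \K$, which has no nonzero finite-dimensional simple modules anyway; in all cases of interest $d \geq 1$.

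The main obstacle is essentially bookkeeping rather than mathematics: making sure the GK dimension comparison $\gk(A/\Ann(M)) \leq \gk(A)$ is invoked correctly (GK dimension of a factor algebra is at most that of the algebra, by e.g. \cite{mcrob}) and that the strict inequality $\gk(A/\Ann M) = 0 < d = \gk(A)$ genuinely yields $\Ann(M) \neq 0$. There is no hard estimate here; the content is just assembling the finiteness statements already established for essentially regular algebras.
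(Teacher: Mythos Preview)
Your argument is correct and considerably more direct than the paper's. The paper passes to $H=H(A)$, regards $M$ as a simple $H$-module, and then invokes a lemma of Walton (\cite{walton}, Lemma~3.1) asserting that the largest graded ideal $P\subset\Ann_H(M)$ satisfies $\gk(H/P)\leq 1$; since $\gk(H)>1$ this forces $P\neq 0$, and the paper then argues that $P$ cannot lie entirely in $\K[x_0]$, so its image in $A\cong H/(x_0-1)$ is nonzero. Your route avoids all of this: you simply observe that $A/\Ann(M)$ embeds in $\operatorname{End}_\K(M)$ and is therefore finite-dimensional, whereas $A$ itself is not (since $\gk(A)=\gk(\gr(A))\geq 1$). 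What you gain is a two-line proof with no passage to the homogenization and no external lemma; what the paper's approach buys is perhaps a closer fit with the graded machinery used in the surrounding section, but for this particular lemma that machinery is overkill.

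One small correction: your treatment of the edge case $d=0$ is slightly off. If $A\cong\K$ then $\K$ itself \emph{is} a nonzero finite-dimensional simple $A$-module, and its annihilator is zero, so the lemma is genuinely false there. The right thing to say is that the statement tacitly assumes $d\geq 1$ (as does the paper's own proof, which needs $\gk(H)>1$), and in that range your argument goes through.
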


\begin{proof}Let $H=H(A)$. Write $M=M_A$ (resp. $M=M_H$) when $M$ is regarded as an $A$-module (resp. $H$-module). If $N_H \subset M_H$ as an $H$-module, then $N_A \subset M_A$, so $N_H=0$ or $N_H=M_H$. Thus, $M_H$ is a simple module and, moreover, $\dim_A(M_A)=\dim_H(M_H)$. By \cite{walton}, Lemma 3.1, if $M_H$ is a finite-dimensional simple module and $P$ is the largest graded ideal contained in $\Ann(M_H)$, then $\gk(H/P)$ is 0 or 1. If $P=0$, then $\gk(H/P)=\gk(H)>1$ when $H$ is regular of dimension greater than 1. Hence, if $\dim(M_H)>1$, then $\Ann(M_H)\neq 0$. Since $x_0.m=m$, then $x_0-1 \in \Ann(M_H)$, but $x_0-1$ is not a homogeneous element so $x_0-1 \notin P$. Let $r \in P \subset \Ann(M_H)$ with $r\neq 0$. If $r \in \K[x_0]$ with $r \neq x_0-1$, then $1 \in P$ so $\Ann(M_H)=H$. Hence, $r \notin \K[x_0]$ and so $r \not\equiv 0 \mod (x_0-1)$. Thus, $\Ann(M_A)\neq 0$.\end{proof}

The following result is well-known.

\begin{prop}Suppose $A=\jp$ or $A=\qp$ with $q \in \K^\times$ a nonroot of unity. Then every finite-dimensional simple module of $A$ is 1-dimensional.\end{prop}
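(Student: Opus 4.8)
The plan is to treat the two algebras separately and, in each case, to show that on a finite-dimensional simple module $M$ one of the two generators must act as $0$; this reduces $M$ to a simple module over a one-variable polynomial ring and hence forces $\dim M = 1$ (since $\K$ is algebraically closed).

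For $A = \qp$ with $q$ not a root of unity, the key observation is that $\ker x$ and $\ker y$ are submodules of any module $M$: if $xm = 0$ then $x(ym) = q\,y(xm) = 0$ from $xy = qyx$, so $\ker x$ is $y$-stable, and symmetrically $\ker y$ is a submodule. Let $M$ be finite-dimensional simple with $\dim M > 1$. Then neither $\ker x$ nor $\ker y$ can equal $M$: if, say, $x$ acted as $0$, then $M$ would be a simple module over $\qp/(x) \iso \K[y]$, forcing $\dim M = 1$. Hence both kernels vanish, so $x$ and $y$ act invertibly on the finite-dimensional space $M$, and taking determinants in $xy = qyx$ gives $q^{\dim M} = 1$, contradicting the choice of $q$.

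For $A = \jp$ I would instead show that $y$ acts nilpotently on any finite-dimensional module $M$. For each $n \geq 2$, multiplying the defining relation $yx - xy + y^2 = 0$ on the left by $y^{n-2}$ and taking the trace of the resulting operator on $M$, cyclicity of the trace gives $\tr(y^n) = 0$. Writing the distinct eigenvalues of $y$ as $\mu_1, \dots, \mu_r$ with multiplicities $m_i$, the identities $\sum_i m_i\mu_i^n = 0$ for $n = 2, \dots, r+1$ form a Vandermonde system in the quantities $m_i\mu_i^2$, forcing each $\mu_i = 0$; so $y$ is nilpotent. Then $\ker y \neq 0$, and one checks (from $y(xm) = x(ym) - y(ym)$) that $\ker y$ is a submodule, so simplicity gives $yM = 0$; thus $M$ is a simple module over $\jp/(y) \iso \kx$ and $\dim M = 1$.

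The step that needs the most care is deducing nilpotency of $y$ from $\tr(y^n) = 0$ for all $n \geq 2$, which is exactly what the Vandermonde argument accomplishes. Alternatively, one can localize: $\jp[y\inv] \iso \K[s][y^{\pm 1}]$ with $y s y\inv = s - 1$, where $s = y\inv x$, and this ring admits no nonzero finite-dimensional modules (the eigenvalues of $s$ would form an infinite chain $\lambda, \lambda - 1, \lambda - 2, \dots$), so $y$ cannot act invertibly on $M$; the remainder of the argument is then as above. Everything else is immediate from simplicity together with $\K$ being algebraically closed.
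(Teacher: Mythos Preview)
Your argument is correct in both cases. For $\qp$ the kernel-submodule observation together with the determinant trick is clean and complete; for $\jp$ the trace identity $\tr(y^n)=0$ for $n\ge 2$ really does follow from cyclicity as you say, and the Vandermonde reduction (or your alternative localization at $y$) yields nilpotency of $y$ and hence $yM=0$.

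The paper's proof is organized differently. It argues on the ring side rather than the module side: for a finite-dimensional simple $M$ one has that $\Ann(M)$ is a nonzero prime ideal, and then one identifies the nonzero primes by localizing. For $\qp$ the multiplicative set $C=\{x^iy^j\}$ is Ore and $AC\inv$ is simple, so every nonzero prime contains $x$ or $y$; for $\jp$ one localizes at $\{y^i\}$ (here $y$ is normal), obtains a simple ring, and concludes every nonzero prime contains $y$. In each case one then passes to $A/(x)$ or $A/(y)$, a polynomial ring in one variable, forcing $\dim M=1$. Your approach avoids any appeal to the prime spectrum or to simplicity of the localized ring, replacing it with elementary linear algebra on $M$ (determinants, traces, Vandermonde); this is more self-contained and arguably more transparent for a reader who does not already know that the quantum torus and $\jp[y\inv]$ are simple. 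The paper's approach, on the other hand, makes the \emph{reason} structural---it locates the obstruction in $\Spec A$---and generalizes more readily to other algebras once one knows their localizations.
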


\begin{proof}(Sketch) Let $M$ be such a module and let $xy$ and $y$ be the standard generators of $A$. By Lemma 5.9, $\Ann(M)$ is a nonzero prime ideal. In the case $A=\qp$, one checks that $C=\{x^iy^j \mid i,j \in \mathbb{N}\}$ is an Ore set and $AC\inv$ is a simple ring, so that every prime ideals contains either $x$ or $y$. Now $A/xA \iso \K[y]$ and $A/yA \iso \K[x]$ and the result follows. For $A=\jp$, one repeats with $C=\{y^i \mid i \in \mathbb{N}\}$ so that every nonzero prime ideal contains $y$.\end{proof}

The following result applies to homogenizations of 2-dimensional essentially regular algebras. However, in light of Proposition \ref{geo}, it seems reasonable that it may apply to certain higher dimensional algebras as well.

\begin{thm}\label{fdim}Let $A$ be an essentially regular algebra of dimension two that is not PI. If $M$ is a finite-dimensional simple $A$-module, then $M$ is 1-dimensional.\end{thm}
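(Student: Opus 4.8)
The plan is a case check based on Corollary~\ref{eclass}, which says $A$ is one of $\qp,\wa,\jp,\sjp,\env$; the non-PI hypothesis forces $q$ to be a nonroot of unity in the cases $\qp$ and $\wa=A_1^q(\K)$, with the single further value $q=1$ (where $\wa\iso\fwa$ is the Weyl algebra) also non-PI. The cases $A=\qp$ with $q$ a nonroot of unity, and $A=\jp$, are exactly the preceding Proposition, so the real work is with $\wa$, $\sjp$, $\env$. Each of these is a domain, being a filtered deformation of a commutative or quantum polynomial ring.

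The common reduction runs as follows. Let $M$ be a finite-dimensional simple $A$-module. By Lemma~\ref{simps}, $P:=\Ann(M)$ is nonzero, and being primitive it is prime. Since $A/P$ acts faithfully on the finite-dimensional space $M$, it is a finite-dimensional prime artinian $\K$-algebra, hence (as $\K$ is algebraically closed) $A/P\iso M_n(\K)$ with $n=\dim_\K M$; so it suffices to show $A/P$ is commutative. I will do this by producing, in each case, a nonzero regular normal element $z\in A$ such that $A/zA$ is commutative and every nonzero prime ideal of $A$ contains $z$; granting this, $z\in P$, so $A/P$ is a homomorphic image of the commutative ring $A/zA$ and we are done. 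The statement that every nonzero prime contains $z$ will come either from $A[z\inv]$ being simple --- a nonzero prime of $A$ avoiding $z$ would, since $A$ is a domain, extend to a nonzero prime of $A[z\inv]$ --- or from an explicit description of $\Spec A$.

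For $\env$ and $\sjp$ I use their presentations as Ore extensions $A=\K[y][x;\delta]$ with $\delta=g(y)\tfrac{d}{dy}$, where $g(y)=y$ for $\env$ (as $[x,y]=y$) and $g(y)=y^2+1$ for $\sjp$ (as $[x,y]=y^2+1$). Take $z=g(y)$. From $x\,g(y)=g(y)x+g'(y)g(y)=g(y)\bigl(x+g'(y)\bigr)$ one sees $z$ is normal, and $A/zA\iso\bigl(\K[y]/(g(y))\bigr)[x]$ with zero induced derivation, hence commutative (it is $\K[x]$ for $\env$ and $\K[x]\times\K[x]$ for $\sjp$). Inverting $z$ gives $A[z\inv]\iso\K[y,g(y)\inv][x;\delta]$, and the base $R=\K[y,g(y)\inv]$ is $\delta$-simple: in characteristic zero any nonzero $\delta$-stable ideal of $\K[y]$ has all of its roots among the zeros of $g(y)$ (a root of multiplicity $k$ not annihilating $g$ would have multiplicity exactly $k-1$ in $g(y)f'$), and those zeros become units after inverting $g(y)$. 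Finally, a commutative $\delta$-simple $\mathbb{Q}$-algebra $R$ with $\delta\neq0$ has $R[x;\delta]$ simple: bracket a minimal-degree element of a nonzero ideal against $y$, using that $\delta(y)$ is a unit and characteristic is zero, to drop the degree until one lands in $R$, which is then all of $R$. (The same argument handles $\jp$, with $g(y)=y^2$ and $z=y$.)

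The genuinely harder case is $\wa$ with $q$ a nonroot of unity and $q\neq1$, where the Ore presentation over $\K[y]$ carries a nontrivial twist ($\sigma(y)=qy$, $\delta(y)=-1$) and $A$ is not simple. Here take $z=w:=(1-q)yx+1$, which is normal with $xw=qwx$ and $wy=qyw$; in $\wa/wA$ one computes $yx=xy=\tfrac{1}{q-1}$, so $\wa/wA\iso\K[x,y]/\bigl(xy-\tfrac{1}{q-1}\bigr)$ is commutative. The set $\{y^i\}$ is an Ore set in $\wa$ (a $\sigma$-stable multiplicative subset of the base $\K[y]$), and $\wa[y\inv]\iso\K[t][y^{\pm1};\tau]$ where $t=yx$ and $\tau$ is an affine automorphism of $\K[t]$ with linear part $q^{\pm1}$; since $q$ is not a root of unity, $\tau$ has a unique fixed point $t_0$, the only nonzero $\tau$-stable prime of $\K[t]$ is $(t-t_0)$, and $w$ is a scalar multiple of $t-t_0$. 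Inverting $t-t_0$ yields the simple ring $\K[t,(t-t_0)\inv][y^{\pm1};\tau]$, so every nonzero prime of $\wa[y\inv]$ contains $t-t_0$; and since $\wa/yA=0$, every prime of $\wa$ avoids $y$ and so corresponds to a prime of $\wa[y\inv]$, hence every nonzero prime of $\wa$ contains $w$. Finally $\fwa=A_1(\K)$ (the value $q=1$) is simple, so $\Ann(M)$ is either $0$ (impossible by Lemma~\ref{simps}) or all of $\fwa$ (impossible since $M\neq0$), so $\fwa$ has no finite-dimensional simple modules and the statement is vacuous there. The main obstacle throughout is verifying that the relevant localizations are simple: the $\delta$-simplicity computation for $\env,\sjp,\jp$, and the skew-Laurent identification together with the root-of-unity hypothesis on $\tau$ for $\wa$.
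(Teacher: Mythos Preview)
Your argument is correct and takes a genuinely different route from the paper's. The paper does not run the classification of Corollary~\ref{eclass} case-by-case; it works uniformly inside $H=H(A)$ via the geometry of Section~\ref{geom}. It takes the canonical cubic element $g\in H$ with $H/gH\iso\B(E,\L,\sigma)$, uses the non-PI hypothesis to get $|\sigma|=\infty$, and invokes \cite{atv2}, Theorem~7.3, to conclude that there are no $g$-torsionfree finite-dimensional simple $H$-modules. Since $x_0\mid g$ and $E=E_0\cup E_1$, any $g$-torsion simple module lives either over $\B(E_0,\L,\sigma{\mid_{E_0}})$---hence over $\gr(A)\in\{\qp,\jp\}$, handled by the preceding Proposition---or over $\B(E_1,\L,\id)$, which is commutative. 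Your approach trades this machinery for explicit ring theory: in each of $\env,\sjp,\wa$ you exhibit a normal element $z$ with $A[z\inv]$ simple and $A/zA$ commutative, forcing $\Ann(M)\supset(z)$. The paper's route is more conceptual, meshes with Propositions~\ref{geo} and~\ref{dim1}, and points directly toward the higher-dimensional generalization in Theorem~\ref{Itors}; yours is more elementary, avoids the twisted-homogeneous-coordinate-ring apparatus entirely, and makes the structure of $\Spec A$ (and hence the $1$-dimensional modules) explicit in each case. One small slip: you write ``$\wa/yA=0$'', but $yA$ is not two-sided; what you mean, and what your argument uses, is that the two-sided ideal generated by $y$ is all of $\wa$, which is immediate from $xy-qyx=-1$.
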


\begin{proof}Let $g \in H=H(A)$ be the canonical element such that $H/gH \iso \B = B(E,\mathcal{L},\sigma)$ and let $Q=\Ann M$. Because $|\sigma|=\infty$, the set of $g$-torsionfree simple modules of $H$ is empty (\cite{atv2}, Theorem 7.3). Hence, we may assume $M$ is $g$-torsion and therefore $M$ corresponds to a finite-dimensional simple module of $\B$.

Since $H$ is a homogenization, then $g=g_0g_1$ where $g_i \notin \K$ for $i=1,2$. It is clear that $x_0 \mid g$ so set $g_0=x_0$. Hence, $g_0 \in Q$ or $g_1 \in Q$ because $Q$ is prime. 

If $g_0$ and $g_1$ are irreducible, then the point scheme $E$ decomposes as $E=E_0 \cup E_1$. Thus, $M$ corresponds to a finite-dimensional simple module of $\B(E_0,\mathcal{L},\sigma{\mid_{E_0}})$ or $\B(E_1,\L,\sigma{\mid_{E_1}})$. In the first case, we have that $\B(E_0,\L,\sigma{\mid_{E_0}})$ is isomorphic to the twisted homogeneous coordinate ring of $\qp$ or $\jp$. Since $\sigma{\mid_{E_1}} = \id$, then $\B(E_1,\L,\sigma{\mid_{E_1}})$ is commutative. Hence, $H/Q$ is commutative and $Q$ contains $x_0-1$ so $M$ is a 1-dimensional simple module of $A$.

If $g$ divides into three linear factors $g_i$, $i=1,2,3$, then $\B/g_i\B$ is isomorphic to the twisted homogeneous coordinate ring of $\qp$ or $\jp$ for each $i$.\end{proof}

As a corollary, we recover a well-known result regarding the Weyl algebra.

\begin{cor}The first Weyl algebra $\fwa$ has no finite-dimensional simple modules.\end{cor}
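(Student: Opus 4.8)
The plan is to deduce the corollary directly from Theorem \ref{fdim}, after verifying its two hypotheses for $A = \fwa$. That $\fwa$ is essentially regular of dimension two is recorded in the discussion following the definition of essential regularity; it is the algebra $\wa$ with $q = 1$ appearing in Corollary \ref{eclass}, so $H(\fwa)$ is a $3$-dimensional regular algebra, and it is geometric by Proposition \ref{geo} since $\gr(\fwa) = \qp$ with $q = 1$ is the (geometric) commutative polynomial ring. The only hypothesis that needs comment is that $\fwa$ is not PI, which is classical: over a characteristic-zero field the first Weyl algebra is simple with center $\K$ and is infinite-dimensional over $\K$, so by Kaplansky's theorem it satisfies no polynomial identity.

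Granting these, Theorem \ref{fdim} shows that every finite-dimensional simple $\fwa$-module is $1$-dimensional, so it remains to check that $\fwa$ has no $1$-dimensional module. By Proposition \ref{dim1}, any such module is $M_p$ for a point $p \in E_1$, i.e.\ a point at which the defining relation vanishes on the diagonal. Here the homogenized relation is $\hat f = yx - xy + x_0^2$, whose bilinear form at $(p,p)$ with $p = (1 : p_1 : p_2)$ equals $p_2 p_1 - p_1 p_2 + 1 = 1 \neq 0$; hence $E_1 = \emptyset$ and $\fwa$ has no $1$-dimensional modules. Combining the two facts, $\fwa$ has no finite-dimensional simple modules. (This is of course the familiar observation that scalars $a, b \in \K$ cannot satisfy $ba - ab + 1 = 0$.)

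I expect no genuine obstacle here: the argument is a formal application of Theorem \ref{fdim} together with the emptiness of the diagonal component $E_1$, and the only non-formal input is the standard fact that the characteristic-zero Weyl algebra is not PI, which is needed precisely so that Theorem \ref{fdim} applies.
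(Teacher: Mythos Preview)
Your proposal is correct and follows essentially the same approach as the paper: apply Theorem \ref{fdim} to reduce to the $1$-dimensional case, then use Proposition \ref{dim1} and compute directly that $E_1 = \emptyset$. The paper's own proof is terser, recording only the emptiness of $E_1$ and leaving the hypotheses of Theorem \ref{fdim} (essential regularity of dimension two, non-PI) implicit from context; your version simply makes these verifications explicit.
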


\begin{proof}If $p \in E_1$, then $p=(1,a,b)$. The defining relation $f=x_1x_2-x_2x_1-1$ gives $f(p,p) = ab-ba-1 = 1 \neq 0$. Hence, the point scheme $E_1$ is empty.\end{proof}

More generally, suppose $A$ is a PBW deformation of a noetherian geometric algebra and $H=H(A)$. By Proposition \ref{geo}, $H$ is geometric. Let $(E,\sigma)$ be the geometric data associated to $H$ and let $I$ be the kernel of the canonical map $H \rightarrow \B(E,\L,\sigma)$. Let $E_1$ be the fixed points of $E$ and $E_0 = E\backslash E_1$. We say $F \subset E_0$ is \textit{reducible} if there exists disjoint and nonempty subschemes $F',F'' \subset F$ such that $F=F' \cup F''$ and $\sigma(F') \subset F'$, $\sigma(F'') \subset F''$. We say $F$ is \textit{reduced} if it is not reducible.

\begin{thm}\label{Itors}With the above notation. If $M$ is a finite-dimensional simple module of $H$ that is $I$-torsion, then $M$ is either a module over $\gr(A)$ or else $M$ is 1-dimensional.\end{thm}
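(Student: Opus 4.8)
The plan is to exploit that $M$, being $I$-torsion, is a module over the twisted homogeneous coordinate ring $\B = H/I = \B(E,\L,\sigma)$, and to pin down on which piece of the covering $E = E_0 \cup E_1$ it is supported. Put $Q = \Ann_H(M)$. Since $M$ is $I$-torsion, $I \subseteq Q$, so $\bar{Q} := Q/I = \Ann_{\B}(M)$ is a well-defined ideal of $\B$; it is primitive, hence prime, because $M$ is simple, and $\B/\bar{Q}$ acts faithfully on the finite-dimensional space $M$, so $\B/\bar{Q}$ is finite-dimensional over $\K$.

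I would then transfer the covering $E = E_0 \cup E_1$ into the ideal theory of $\B$ via the dictionary between $\sigma$-invariant closed subschemes of $E$ and graded ideals of $\B$ (cf.\ \cite{keeler}); note $\B$ is noetherian because $H$ is (Proposition \ref{hprops}(4)). To $E_i$ attach the graded two-sided ideal $\mathcal{I}_i$ with $(\mathcal{I}_i)_d = \ker\bigl(\B_d \to H^0(E_i, \L_d|_{E_i})\bigr)$; $\sigma$-invariance of $E_i$ makes $\mathcal{I}_i$ two-sided and makes $\B/\mathcal{I}_i$ a graded subring of $\B(E_i, \L|_{E_i}, \sigma|_{E_i})$. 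Because $E_0 \cup E_1 = E$ as a set, $\mathcal{I}_0 \cap \mathcal{I}_1$ is the ideal of $E_{\mathrm{red}}$ and so is contained in the nilradical of $\B$, hence in $\bar{Q}$; therefore $\mathcal{I}_0 \mathcal{I}_1 \subseteq \mathcal{I}_0 \cap \mathcal{I}_1 \subseteq \bar{Q}$, and primeness of $\bar{Q}$ forces $\mathcal{I}_0 \subseteq \bar{Q}$ or $\mathcal{I}_1 \subseteq \bar{Q}$.

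It then remains to treat the two cases. If $\mathcal{I}_1 \subseteq \bar{Q}$, then $M$ is a module over $\B/\mathcal{I}_1 \hookrightarrow \B(E_1, \L|_{E_1}, \sigma|_{E_1})$; but $E_1$ is the fixed locus, so $\sigma|_{E_1} = \id$ and this ring is the ordinary section ring $\bigoplus_d H^0(E_1, (\L|_{E_1})^{\otimes d})$, which is commutative. Thus $\B/\bar{Q}$ is a finite-dimensional commutative prime $\K$-algebra, hence a finite field extension of the algebraically closed field $\K$, hence $\K$ itself, and $M$ is $1$-dimensional. If instead $\mathcal{I}_0 \subseteq \bar{Q}$, recall from the proof of Proposition \ref{geo} that every point of $E$ with nonzero first coordinate is $\sigma$-fixed; hence $E_0 \subseteq \{x_0 = 0\} \cap E$, so the class of $x_0$ in $\B_1$, read as a section of $\L$ on $E$, vanishes on $E_0$, i.e.\ $x_0 \in \mathcal{I}_0 \subseteq \bar{Q} \subseteq Q$. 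Consequently $x_0 M = 0$ and $M$ is a module over $H/x_0H \iso \gr(A)$.

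The step I expect to be the main obstacle is the geometric one in the second paragraph: making precise, in this possibly noncommutative and possibly non-reduced setting, that the correspondence of \cite{keeler} really produces two-sided graded ideals $\mathcal{I}_i$ with $\B/\mathcal{I}_i \hookrightarrow \B(E_i, \L|_{E_i}, \sigma|_{E_i})$ and with $\mathcal{I}_0 \cap \mathcal{I}_1$ contained in the nilradical of $\B$. One can lighten this burden by first passing to the largest graded ideal $\bar{P} \subseteq \bar{Q}$ and applying \cite{walton}, Lemma 3.1 (as in Lemma \ref{simps}) to obtain $\gk(\B/\bar{P}) \in \{0,1\}$: if $\gk(\B/\bar{P}) = 0$ then $\B/\bar{P}$ is finite-dimensional connected graded, its augmentation ideal is nilpotent and annihilates $M$, forcing $M$ to be $1$-dimensional outright; and if $\gk(\B/\bar{P}) = 1$ then $\bar{P}$ cuts out a finite $\sigma$-invariant subscheme of $E$, for which the ideal--subscheme dictionary is cleanest, and one runs the above dichotomy there.
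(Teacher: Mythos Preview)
Your approach is correct and lands on the same dichotomy as the paper. The paper's proof is in fact your fallback from the final paragraph: it passes immediately to the largest graded prime $P \subseteq Q$, invokes \cite{dmtwist}, Lemma~3.3, to identify $\B/P$ with $\B(F,\O_F(1),\sigma|_F)$ for a reduced $\sigma$-invariant closed subscheme $F \subset E$, and then asserts (without the explicit ideal-product argument you supply) that such an $F$ either sits inside the point scheme of $\gr(A)$ or is pointwise fixed by $\sigma$, giving a commutative quotient. Your direct route via the ideals $\mathcal{I}_i$ is more transparent about \emph{why} the dichotomy holds; the only repair needed is the one you anticipate, namely that $E_0 = E \setminus E_1$ is open rather than closed, so one should work with the closed $\sigma$-invariant subscheme $E \cap \{x_0 = 0\}$ (which contains $E_0$ and still has $x_0$ vanishing on it) in place of $E_0$ when defining $\mathcal{I}_0$.
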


\begin{proof}Let $M$ be an $I$-torsion simple module of $H$, so we may regard $M$ as a simple module of $\B$. Let $Q=\Ann(M)$ and so $Q \neq 0$ by Lemma \ref{simps}. If $P$ is the largest homogeneous prime ideal contained in $Q$, then $P$ corresponds to a reduced closed subscheme of $F \subset E$ and $\B/P \iso \B(F,\mathcal{O}_F(1),\sigma{\mid_F})$ (\cite{dmtwist}, Lemma 3.3). These subschemes are well-understood in this case, and so either $F$ corresponds to a subscheme in the twisted homogeneous coordinate ring associated to $\gr(A)$, or else $F$ is fixed pointwise by $\sigma{\mid_F}$, in which case $\B/P$ is commutative.\end{proof}

If $S$ is a deformed Sklyanin algebra that is not PI, then the only finite-dimensional simple module over $\gr(S)$ is the trivial one (\cite{walton}, Theorem 1.3). By \cite{bruyn}, there are exactly 8 fixed points in $E$. Hence, all $I$-torsion, finite-dimensional simple modules are 1-dimensional.

The algebra $U(\mathfrak{sl}_2(\K))$ is essentially regular of dimension three, is not PI, but does have finite-dimensional simple modules of every dimension $n$. There are other examples of essentially regular algebras exhibiting the same behavior (see \cite{redman}, \cite{benkroby}). This leads to the following conjecture.

\begin{conj}Let $A$ be essentially regular of dimension three that is not PI. Then either all finite-dimensional simple modules are 1-dimensional or else $A$ has finite dimensional simple modules of arbitrarily large dimension.\end{conj}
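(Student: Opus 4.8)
Since the statement is a conjecture, what follows is a proposed line of attack rather than a proof. The first step is to render the dichotomy ring-theoretically. A finite-dimensional simple $A$-module $M$ of dimension $n$ produces a two-sided ideal $Q=\Ann(M)$ with $A/Q$ primitive and PI, and since $A$ is affine over the algebraically closed field $\K$, the Artin-Tate lemma together with Kaplansky's theorem forces $A/Q\iso M_n(\K)$. Because $A$ is not PI, every such $Q$ is nonzero (and $A$ is prime by Proposition \ref{hprops}(1), its homogenization being a $4$-dimensional regular domain). Thus the conjecture is equivalent to the assertion that the set of $n$ for which $M_n(\K)$ is a quotient of $A$ is either $\{1\}$ or unbounded.

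Two mechanisms can produce a quotient $M_n(\K)$ with $n>1$, and the plan is to show that each, once present, forces unboundedness. The first is a prime ideal $P$ for which $R=A/P$ is an \emph{infinite}-dimensional prime affine PI ring: then $\gk(\cnt(R))\geq 1$, Posner's theorem gives $\Frac(R)\iso M_m(D)$ with $m=\pid(R)$, and the Azumaya locus is a dense open subset of the positive-dimensional irreducible variety $\Spec\cnt(R)$, so (since $\K$ is algebraically closed) it parametrizes infinitely many pairwise non-isomorphic simple $R$-modules of $\K$-dimension $m$; moreover a factorization $A\twoheadrightarrow R\twoheadrightarrow M_n(\K)$ forces $m\geq n>1$. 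This already yields infinitely many finite-dimensional simples of a single dimension $m>1$; one would then try to climb, producing primes below $P$ of strictly larger PI-degree, to reach unboundedness. The second mechanism is a triangular or highest-weight structure, as in $U(\mathfrak{sl}_2(\K))$: a distinguished commutative subalgebra together with a family of Verma-type modules whose finite-dimensional simple quotients have every dimension. Here one would want to detect such a structure intrinsically from a presentation of $A$ by generators and relations.

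Concretely, I would first settle the conjecture for the classes already on the table (Examples \ref{ex1}--\ref{ex2}): for enveloping algebras $U(\mathfrak{g})$ with $\dim_\K\mathfrak{g}=3$ the finite-dimensional simple modules are $1$-dimensional unless $\mathfrak{g}\iso\mathfrak{sl}_2(\K)$, in which case they occur in every dimension; for down-up algebras $A(\alpha,\beta,\gamma)$ with $\beta\neq 0$ one reads this off the known classification of their simple modules; and for a deformed Sklyanin algebra $S$, Walton's result that $\gr(S)$ has only the trivial finite-dimensional simple, combined with Theorem \ref{Itors}, reduces the conjecture to the $I$-torsionfree simples of $H(S)$. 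For the general case I would transport the problem to $H=H(A)$, which is $4$-dimensional regular, not PI, and (in the generic case) geometric with infinite-order point-scheme automorphism by Proposition \ref{geo}, and seek the four-dimensional analogue of the torsion/torsionfree dichotomy used in Theorems \ref{fdim} and \ref{Itors}, showing that the multiplicities of the fat point modules of $H$ form a set that is either $\{1\}$ or unbounded by stratifying $E$ according to its finite $\sigma$-orbit loci.

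The main obstacle is precisely this last step. Four-dimensional regular algebras are neither classified nor always geometric, and there is no available substitute for the theorem of Artin-Tate-Van den Bergh that, in dimension three, eliminates the $g$-torsionfree simple modules when $\sigma$ has infinite order. What one really needs is a rigidity principle, roughly that an essentially regular non-PI algebra cannot possess an \emph{isolated} matrix quotient $M_n(\K)$ with $n>1$; heuristically this should follow by comparing $\cnt(A)$ with the Poisson structure that the deformation induces on $\cnt(\gr(A))$, forcing any matrix quotient of size greater than $1$ onto a positive-dimensional stratum and hence into a family. But I do not see how to make this precise without first understanding the homogenization, so I expect the conjecture to require genuinely new input beyond the methods of this paper.
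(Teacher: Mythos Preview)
The paper offers no proof of this statement: it is explicitly presented as a conjecture, motivated only by the observation that $U(\mathfrak{sl}_2(\K))$ and related algebras have finite-dimensional simples in every dimension, and the text immediately moves on to the PI case without further argument. You correctly recognize this and frame your submission as a research strategy rather than a proof, so there is no proof in the paper to compare against.

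As a strategy, your outline is coherent and well aligned with the paper's toolkit: the reduction to matrix quotients via Kaplansky, the passage to $H(A)$, and the torsion/torsionfree dichotomy all extend the methods of Section~\ref{geom}. Your identification of the obstruction is also accurate: the paper's arguments in Theorems~\ref{fdim} and~\ref{Itors} rely on \cite{atv2}, Theorem 7.3, which has no known analogue for $4$-dimensional regular algebras, and Proposition~\ref{geo} does not guarantee that $H(A)$ is geometric in dimension four. One small caution: your ``climbing'' step, producing primes below $P$ of strictly larger PI-degree to achieve unboundedness, is where the real content lies, and nothing in the paper's machinery supplies it; the mere existence of a positive-dimensional Azumaya locus gives infinitely many simples of a \emph{fixed} dimension $m$, not arbitrarily large ones, so that step would need a genuinely new idea.
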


We end this section with a brief foray into the PI case. Suppose $A$ is prime PI and essentially regular. By Proposition \ref{hprops}, $H=H(A)$ is also prime PI. Moreover, if we let $Q_A$ be the quotient division ring of $A$ and $Q_H$ that of $H$, then
	\[ \pid(H)=\pid(Q_H) = \pid(Q_A(x_0)) = \pid(A[x_0]) = \pid(A).\]
If $A$ is 2-dimensional essentially regular and PI, then $A=\wa$ or $A=\qp$ with $q$ a primitive $n$th root of unity. In each case, $\pid(H(A))=n$. One can also show that $n=|\sigma|$ where $\sigma$ is the automorphism of the geometric pair corresponding to $H(A)$. The proof of Proposition \ref{fdim} implies that the $g$-torsion finite-dimensional simple modules of $A$ correspond to the finite-dimensional simple modules of $\qp$.

The $g$-torsionfree simple modules of $H$ are in 1-1 correspondence with those of $H[g\inv]$. Let $\Lambda_0$ be its degree 0 component. Since $H$ contains a central homogeneous element of degree 1, then $\pid(\Lambda_0) = \pid(H) = n$ (\cite{leb}, page 149). Thus, by \cite{walton}, Theorem 3.5, the $g$-torsionfree simple modules of $H$ all have dimension $n$. 

\section{A 5-dimensional family of regular algebras}\label{fivedim}

Suppose $A$ and $B$ are regular. In terms of generators and relations, the algebra $C=A \tensor B$ is easy to describe. Let $\{x_i\}$ be the generators for $A$ and $\{y_i\}$ those for $B$. Let $\{f_i\}$ be the relations for $A$ and $\{g_i\}$ those for $B$. Associate $x_i \in A$ with $x_i \tensor 1 \in A \tensor B$, and similarly for the $y_i$. Then $A \tensor B$ is the algebra on generators $\{x_i,y_i\}$ satisfying the relations $\{f_i,g_i\}$ along with the relations $x_iy_j-y_jx_i=0$ for all $i,j$.

A similar description holds when $A$ and $B$ are essentially regular. By comparing global dimension, one sees that $H(A \tensor B) \niso H(A) \tensor H(B)$. However, a related identity will be used to prove the following proposition.

\begin{prop}Let $A$ and $B$ be essentially regular algebras. Then $A \tensor B$ is essentially regular.\end{prop}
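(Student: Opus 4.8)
The plan is to reduce the statement to a known regularity-preservation result via the machinery already assembled. Set $C = A\otimes B$ and $H = H(C)$. Since $A$ and $B$ are essentially regular, by Proposition~\ref{grprop} both $\gr(A)$ and $\gr(B)$ are regular, and $C$ is a PBW deformation of $\gr(C)$. The key observation is that $\gr(A\otimes B)\iso \gr(A)\otimes\gr(B)$: on the generators $\{x_i,y_j\}$, the standard filtration on $C$ is the tensor-product filtration, and passing to the associated graded ring kills all lower-order terms in the relations $\{f_i,g_i\}$ while leaving the commutation relations $x_iy_j - y_jx_i$ homogeneous and intact; one checks by a dimension count (using the PBW basis for each factor) that no extra relations appear. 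Hence $\gr(C)\iso\gr(A)\otimes\gr(B)$, and therefore by Proposition~\ref{grprop} it suffices to show that a tensor product of two connected graded regular algebras is regular.

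For that last step I would invoke standard facts: if $R$ and $S$ are connected graded regular algebras of dimensions $d$ and $e$, then $R\otimes S$ is connected graded, has $\gld(R\otimes S) = d+e < \infty$ (global dimension is additive under tensor products over a field for connected graded algebras), has finite GK dimension since $\gk(R\otimes S) = \gk(R)+\gk(S)$, and is AS-Gorenstein because the minimal free resolution of $\K_{R\otimes S}$ is the (graded) tensor product of those of $\K_R$ and $\K_S$, so that $\Ext^{i}_{R\otimes S}(\K,R\otimes S)$ is concentrated in degree $d+e$ and one-dimensional there. Alternatively, and perhaps more cleanly given the framework of Section~\ref{cy}, one can argue via Theorem~\ref{rrz}: $\gr(A)$ and $\gr(B)$ are connected graded Calabi-Yau, the tensor product of two Calabi-Yau algebras is Calabi-Yau (the rigid dualizing complexes tensor together, $R_{R\otimes S} = R_R\otimes R_S$), and a connected graded Calabi-Yau algebra is regular again by Theorem~\ref{rrz}. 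Either route gives that $\gr(C)$ is regular, hence $C = A\otimes B$ is essentially regular.

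The main obstacle is the identification $\gr(A\otimes B)\iso\gr(A)\otimes\gr(B)$. This is not automatic for arbitrary filtered algebras, and the standard-filtration hypothesis is what makes it work: one must verify that the natural surjection $\gr(A)\otimes\gr(B)\twoheadrightarrow\gr(A\otimes B)$ coming from the canonical surjections onto each associated graded is in fact an isomorphism. I would do this by exhibiting a spanning set for $\gr(A\otimes B)$ in degree $m$ of cardinality $\sum_{i+j=m}\dim\gr(A)_i\cdot\dim\gr(B)_j$ — namely the images of products (PBW basis element of $A$ in degree $i$)$\times$(PBW basis element of $B$ in degree $j$) — and comparing Hilbert series, using that $\gr(A)$, $\gr(B)$ are already known to be regular (hence have well-behaved Hilbert series and PBW-type bases) from the hypothesis. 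The remaining homological bookkeeping for tensor products of regular (equivalently, connected graded Calabi-Yau) algebras is routine.
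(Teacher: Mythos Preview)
Your approach is essentially the same as the paper's: both reduce to showing $\gr(A\otimes B)\iso\gr(A)\otimes\gr(B)$ (the paper writes this as $H(A\otimes B)/z_0H(A\otimes B)\iso H(A)/x_0H(A)\otimes H(B)/y_0H(B)$, which is the same thing) and then invoking the fact that a tensor product of regular algebras is regular, together with Proposition~\ref{grprop}. The only differences are cosmetic: the paper dispatches the isomorphism as ``clear from the defining relations'' and cites an external reference (Mao--Wu) for regularity of the tensor product, whereas you spell out a Hilbert-series argument for the former and supply your own proof (via direct verification or the Calabi--Yau route through Theorem~\ref{rrz}) for the latter.
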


\begin{proof}We must show that $H(A \tensor B)$ is regular given that $H(A)$ and $H(B)$ are. Suppose $z_0$ is the homogenizing element in $H(A \tensor B)$ and $x_0$, $y_0$ those in $H(A)$ and $H(B)$, respectively. By Proposition \ref{grprop} and \cite{maowu}, Proposition 3.5, it suffices to prove the following:
\begin{align}\label{teniso}
    H(A \tensor B)/z_0H(A \tensor B) \iso H(A)/x_0H(A) \tensor H(B)/y_0H(B).
\end{align}
This is clear from the defining relations for the given algebras.
\end{proof}

\begin{cor}\label{tencor}Let $A$ and $B$ be $2$-dimensional essentially regular. Then $H(A \tensor B)$ is regular of dimension five.\end{cor}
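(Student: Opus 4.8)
The plan is to leverage the preceding proposition, which already gives that $A \tensor B$ is essentially regular and hence that $H(A \tensor B)$ is regular; what remains is only to pin down its dimension as five. So the argument is essentially a dimension count, carried out on the associated graded algebra.

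First I would observe that, by \eqref{teniso} together with the standard identification $\gr(C) \iso H(C)/z_0 H(C)$ valid for any $C$ of the form \eqref{form}, one has $\gr(A \tensor B) \iso \gr(A) \tensor \gr(B)$. Since $A$ and $B$ are $2$-dimensional essentially regular, Proposition \ref{grprop}, together with the global-dimension bookkeeping in its proof, shows that $\gr(A)$ and $\gr(B)$ are connected graded regular algebras of global dimension $2$. The next step is to compute $\gld\bigl(\gr(A) \tensor \gr(B)\bigr)$: for connected graded $\K$-algebras $R$ and $S$ of finite global dimension one has $\gld R = \pd_R(\K_R)$, and the Künneth formula over the field $\K$ gives $\Tor^{R \tensor S}_{\ast}(\K,\K) \iso \Tor^{R}_{\ast}(\K,\K) \tensor \Tor^{S}_{\ast}(\K,\K)$, so $\pd_{R \tensor S}(\K) = \pd_R(\K) + \pd_S(\K)$ and therefore $\gld(R \tensor S) = \gld R + \gld S$. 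Applied here this yields $\gld(\gr(A) \tensor \gr(B)) = 2 + 2 = 4$. Finally, by the Rees Lemma computation already used in the proof of Proposition \ref{grprop}, $\gld H(A \tensor B) = \gld \gr(A \tensor B) + 1 = 5$, and since $H(A \tensor B)$ is regular, it is regular of dimension five.

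The only genuinely substantive ingredient is the additivity of global dimension under tensor products; over a field this is classical, following from the degeneration of the Künneth spectral sequence (equivalently, the flatness of everything in sight), and it is the one place where a little care is warranted. No real obstacle arises, so the corollary follows quickly from the preceding proposition once this bookkeeping is in place.
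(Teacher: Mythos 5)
Your proposal is correct and follows essentially the same route as the paper: the corollary is intended as an immediate consequence of the preceding proposition, whose proof reduces everything via \eqref{teniso} to the tensor product $\gr(A)\tensor\gr(B)$ of two $2$-dimensional regular algebras and cites \cite{maowu} for the fact that such a tensor product is regular with dimensions adding. Your explicit dimension count (Künneth formula for the trivial module giving $\gld(R\tensor S)=\gld R+\gld S$ for connected graded algebras over $\K$, followed by the Rees Lemma shift $\gld H(A\tensor B)=\gld\gr(A\tensor B)+1=5$) simply unpacks what that citation encapsulates, and is sound.
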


Using the techniques developed above, we hope to understand the module structure of algebras of the form $H(A \tensor B)$.

\begin{ex}Let $A=B=\jp$ with generating sets $\{x_1,x_2\}$ and $\{y_1,y_2\}$, respectively. Let $\hat{x}_i=x_i \tensor 1$ and $\hat{y}_i=1 \tensor y_i$ for $i=1,2$. Then $C = A \tensor B$ is generated by $\{\hat{x}_1,\hat{x}_2,\hat{y}_1,\hat{y}_2\}$ and the defining relations are
\begin{align*}
    f &= \hat{x}_1\hat{x}_2-\hat{x}_2\hat{x}_1+\hat{x}_1^2, \\
    g &= \hat{y}_1\hat{y}_2-\hat{y}_2\hat{y}_1+\hat{y}_1^2, \\
    h_{ij} &= \hat{x}_i\hat{y}_j-\hat{y}_j\hat{x}_i \text{ for } i,j \in \{1,2\}.
\end{align*}
Let $E^A,E^B,E^C$ be the point schemes of $A,B$ and $C$, respectively. We claim that $E^C \iso E^A \cup E^B$. Let $p=(a_1:a_2:a_3:a_4) \in \mathbb{P}^3$. Then $p \in E^C$ if there exists $q=(b_1:b_2:b_3:b_4) \in \mathbb{P}^3$ such that $(p,q)$ is a zero for the above defining relations. The relation $f_1$ gives $\frac{a_1}{a_2}=\frac{b_1}{b_1+b_2}$ and $f_2$ gives $\frac{a_3}{a_4}=\frac{b_3}{b_3+b_4}$. Substituting into the the additional relations gives $a_3=a_4=0$ or else $a_1=a_2=0$. In the first case the points correspond to $E_A$ and otherwise to $E_B$.\end{ex}

The following proposition generalizes the above example.

\begin{prop}Suppose $A$ and $B$ are regular and $C=A \tensor B$ is not commutative. Then $E^C = E^A \cup E^B$.\end{prop}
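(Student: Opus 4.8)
The plan is to generalize the computation carried out in the Example above. Write $V_A=\Span\{x_1,\dots,x_m\}$, $V_B=\Span\{y_1,\dots,y_n\}$ and $W=V_A\oplus V_B$, so that $C=T(W)/(R_C)$ with
\[
R_C = R_A \oplus R_B \oplus \Span\{\,x_iy_j-y_jx_i : 1\le i\le m,\ 1\le j\le n\,\},
\]
where $R_A\subset V_A\tensor V_A$ and $R_B\subset V_B\tensor V_B$ are the defining relations of $A$ and $B$ and $(E^A,\sigma_A)$, $(E^B,\sigma_B)$ their geometric data. I would write a point of $\mathbb{P}(W^*)\times\mathbb{P}(W^*)$ as $(p,q)$ with $p=(a:b)$ and $q=(a':b')$, where $a,a'\in\K^m$ are the $x$-coordinates and $b,b'\in\K^n$ the $y$-coordinates; the two natural embeddings $\mathbb{P}(V_A^*),\mathbb{P}(V_B^*)\hookrightarrow\mathbb{P}(W^*)$ realize $E^A$ and $E^B$ as closed subschemes of the loci $\{b=0\}$ and $\{a=0\}$ respectively.

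The first step is to unwind membership of $(p,q)$ in $\van(R_C)$. The relations in $R_A$ involve only the $x$-variables, so they say $r(a,a')=0$ for every $r\in R_A$; likewise $R_B$ says $s(b,b')=0$ for every $s\in R_B$; and the commutator relation $x_iy_j-y_jx_i$ evaluates to $a_ib'_j-b_ja'_i$, so the commutator relations together amount to the single matrix identity $a\,{b'}^{T}=a'\,b^{T}$ — an equality of two matrices of rank at most one. Now split on the vanishing of the $x$- and $y$-parts of $p$. If $b=0$ then $a\neq 0$ (since $p\neq 0$), so $a\,{b'}^{T}=a'\,b^{T}=0$ forces $b'=0$, and $(p,q)$ reduces to a point $((a:0),(a':0))$ coming from $\van(R_A)$; thus this portion of $\van(R_C)$ is exactly the graph of $\sigma_A$ on the copy of $E^A$. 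Symmetrically, $a=0$ gives the graph of $\sigma_B$ on $E^B$. So $\van(R_C)$ certainly contains the graph of $\sigma_A\sqcup\sigma_B$ on $E^A\cup E^B$, and the content of the statement is that it contains nothing else.

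That leaves the case $a\neq 0$ and $b\neq 0$, and this is the step I expect to be the main obstacle. Here $a\,b^{T}\neq 0$, so matching column and row spaces in $a\,{b'}^{T}=a'\,b^{T}$ forces $a'=\lambda a$ and $b'=\lambda b$ for a common $\lambda\in\K^\times$, whence $q=p$ in $\mathbb{P}(W^*)$; feeding this back in gives $r(a,a)=0$ and $s(b,b)=0$ for all $r\in R_A$, $s\in R_B$, i.e. $a$ represents a $\sigma_A$-fixed point of $E^A$ and $b$ a $\sigma_B$-fixed point of $E^B$. One must show no such $(p,p)$ occurs. This is exactly where the noncommutativity of $C$ enters: if $C$ were commutative then every such $(p,p)$ would lie in $\van(R_C)$ and $E^C$ would be all of $\mathbb{P}(W^*)$, far larger than $E^A\cup E^B$. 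Noncommutativity by itself only gives $\sigma_A\neq\id$ or $\sigma_B\neq\id$, so the real work is to use the geometric data of $A$ and $B$ to see that $\sigma_A$ and $\sigma_B$ cannot both fix a point in a way that yields a $p$ with both parts nonzero. Once this case is eliminated, $\van(R_C)$ is precisely the graph of $\sigma_A\sqcup\sigma_B$ on $E^A\cup E^B$, giving \textbf{G1}; and \textbf{G2} follows from \textbf{G2} for $A$ and $B$ by restricting a form on $W$ vanishing on this graph to its $V_A\tensor V_A$- and $V_B\tensor V_B$-blocks. This yields $E^C=E^A\cup E^B$.
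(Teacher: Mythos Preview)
Your approach is exactly the paper's: set up the relations, split on whether the $x$-part or $y$-part of $p$ vanishes, and in the mixed case use the commutator relations $a_ib'_j=b_ja'_i$ to force $q=p$ (up to scalar). The paper then asserts that this forces ``$\sigma_B$ constant, so $B$ commutative'' (and symmetrically for $A$), contradicting noncommutativity of $C$. You pause at precisely this step and flag it as the main obstacle, noting that all one actually obtains is that the $a$-part is a $\sigma_A$-fixed point and the $b$-part a $\sigma_B$-fixed point.

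Your hesitation is warranted: the leap from ``this particular $b$ is $\sigma_B$-fixed'' to ``$\sigma_B=\id$'' is not justified, and in fact the proposition as stated is not quite correct. Already in the paper's own preceding example $A=B=\jp$ (relation $\hat x_1\hat x_2-\hat x_2\hat x_1+\hat x_1^2$), the automorphism $\sigma_A(a_1:a_2)=(a_1:a_1+a_2)$ has the unique fixed point $(0:1)$, and one checks directly that $p=(0:1:0:1)\in\mathbb{P}^3$ lies in $E^C$ with $\sigma_C(p)=p$, yet $p\notin E^A\cup E^B$. The same phenomenon occurs for $A=\qp$, $B=\qpp$ with $q,p\neq 1$: the four points $(1:0:1:0)$, $(1:0:0:1)$, $(0:1:1:0)$, $(0:1:0:1)$ all lie in $E^C\setminus(E^A\cup E^B)$. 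So your analysis of the mixed case is complete and correct; what it shows is that
\[
E^C \;=\; E^A\cup E^B\cup\bigl(\mathrm{Fix}(\sigma_A)\times\mathrm{Fix}(\sigma_B)\bigr),
\]
with $\sigma_C$ restricting to $\sigma_A$, $\sigma_B$, and the identity on the three pieces respectively. The statement $E^C=E^A\cup E^B$ holds only under the additional hypothesis that $\mathrm{Fix}(\sigma_A)$ or $\mathrm{Fix}(\sigma_B)$ is empty.
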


\begin{proof}Let $\{x_1,\hdots,x_n,y_1,\hdots,y_m\}$ be the generators of $C$ subject to relations $\{f_i$, $g_i$, $h_{ij}\}$ such that the subalgebra generated by the $x_i$ (resp. $y_i$) subject to the relations $f_i$ (resp. $g_i$) is isomorphic to $A$ (resp. $B$). Identify $A$ and $B$ with their respective images in $C$. Let $h_{ij}=x_iy_j-y_jx_i$ for $1 \leq i \leq n,1 \leq j \leq m$. Let $E$ be the point scheme of $C$ and $\sigma$ the corresponding automorphism.  Choose $p = (a_1:\cdots:a_n:b_1:\cdots:b_m) \in E_A \times E_B$ and $q=\sigma(p)=(c_1:\cdots:c_n:d_1:\cdots:d_m)$. We claim either $a_i=0$ for all $i$ or else $b_i=0$ for all $i$.

Let $\sigma_A=\sigma{\mid_A}$ and $\sigma_B=\sigma{\mid_B}$. Suppose there exists $l,k$ such that $a_l\neq 0$ and $b_k \neq 0$. There is no loss in letting $a_l=1$. Hence, $0=h_{lj}(p,q)=d_j-b_jc_l$. If $c_l=0$, then $d_j=0$ for all $j$. Hence, $\sigma_B (b_1:\hdots:b_m)=0$, a contradiction, so $c_l\neq 0$. Then $b_j=d_j$ for all $j$. Thus, either $a_i=0$ for all $i$ or else $\sigma_B$ is constant, so $B$ is commutative. An identical argument shows that either $b_i=0$ for all $i$ or else $\sigma_A$ is constant, so $A$ is commutative. If $A$ and $B$ are commutative, then so is $C$.\end{proof}

If $A$ and $B$ are essentially regular, then the point scheme of $H(A \tensor B)$ has two components, $E_0$ and $E_1$. The component $E_0$ corresponds to that of $H(A \tensor B)/z_0H(A \tensor B)$ (see (\ref{teniso})). An argument similar to that from the previous proposition shows that $E_1$ corresponds to $E_1^A \cup E_1^B$. Consequently, if $M$ is a 1-dimensional simple module of $A \tensor B$, then $M$ is isomorphic to a 1-dimensional simple module of $A$ or $B$.

\section*{Acknowledgements}

The author would like to thank his advisor, Allen Bell, for his assistance throughout this project. Additionally, the author thanks to Dan Rogalski for helpful conversations at the MSRI Summer Graduate Workshop on Noncommuatative Algebraic Geometry and correspondence afterwards.

\bibliography{biblio}{}
\bibliographystyle{plain}

\end{document}